\newlist{myEnumerate}{enumerate}{9}
\setlist[myEnumerate,1,2,3,4,5,6,7,8,9]{label*=\arabic*.} 
\setlist[itemize]{label=\textbullet}
\setlist[itemize,2]{label=--}
\setlist[itemize,3]{label=*}
\setlist[itemize,5]{label=--}
\setlist[itemize,6]{label=*}
\setlist[itemize,8]{label=--}
\crefname{paragraph}{section}{sections}
\DeclareRobustCommand{\bigDelta}{\mathop{\vphantom{\sum}\mathpalette\bigDelta@\relax}\slimits@}
\newcommand{\bigDelta@}[2]{\vcenter{\sbox\z@{$#1\sum$}\hbox{\resizebox{.9\dimexpr\ht\z@+\dp\z@}{!}{$\m@th\Delta$}}}}
\newcommand{\VM}{$\ensuremath{\mathrm{ISO}\,\text{-}\,\mathrm{VERTEXMINOR}}$}
\newcommand{\SVM}{$\ensuremath{\mathrm{ISO}\,\text{-}\,\mathrm{STARVERTEXMINOR}}$}
\newcommand{\SOET}{$\ensuremath{\mathrm{ISO}\,\text{-}\,\mathrm{SOET}}$}
\newcommand{\MINOR}{$\ensuremath{MINOR}$}
\newcommand{\HMINOR}{$H-\ensuremath{MINOR}$}
\newcommand{\CH}{$\ensuremath{\mathrm{CUBHAM}}$}
\newcommand{\bs}[1]{\ensuremath{\boldsymbol{#1}}}
\newtheorem{theorem}{Theorem}[section]
\newenvironment{thm}{$\vspace{-0.5em}$\begin{theorem}}{\hfill$\diamond$\end{theorem}}
\crefname{thm}{theorem}{theorems}
\newtheorem{corollary}{Corollary}[theorem]
\crefname{cor}{corollary}{corollaries}
\newtheorem{lemma}{Lemma}[section]
\newenvironment{lem}{$\vspace{-0.5em}$\begin{lemma}}{\hfill$\diamond$\end{lemma}}
\crefname{lem}{lemma}{lemmas}
\newtheorem{definition}{Definition}[section]
\newenvironment{mydef}{$\vspace{-0.5em}$\begin{definition}}{\hfill$\diamond$\end{definition}}
\crefname{mydef}{definition}{definitions}
\newsavebox{\mybox}
\newtheorem{problem}{Problem}[section]
\newenvironment{prm}{$\vspace{-0.5em}$\begin{problem}}{\hfill$\diamond$\end{problem}}
\crefname{problem}{problem}{problems}
\newcommandx{\unsure}[2][1=]{\todo[linecolor=red,backgroundcolor=red!25,bordercolor=red,#1]{#2}}
\newcommandx{\change}[2][1=]{\todo[linecolor=blue,backgroundcolor=blue!25,bordercolor=blue,#1]{#2}}
\newcommandx{\info}[2][1=]{\todo[linecolor=OliveGreen,backgroundcolor=OliveGreen!25,bordercolor=OliveGreen,#1]{#2}}
\newcommandx{\improvement}[2][1=]{\todo[linecolor=Plum,backgroundcolor=Plum!25,bordercolor=Plum,#1]{#2}}
\newcommandx{\thiswillnotshow}[2][1=]{\todo[disable,#1]{#2}}
\title{The complexity of the vertex-minor problem}
\author{Axel Dahlberg \and Jonas Helsen \and Stephanie Wehner}
\date{QuTech - TU Delft, Lorentzweg 1, 2628CJ Delft, The Netherlands\\[2ex]
\today }
\begin{document}

\maketitle
\begin{abstract}
A graph $H$ is a vertex-minor of a graph $G$ if it can be reached from $G$ by the successive application of local complementations and vertex deletions.
Vertex-minors have been the subject of intense study in graph theory over the last decades and have found applications in other fields such as quantum information theory.
Therefore it is natural to consider the computational complexity of deciding whether a given graph $G$ has a vertex-minor isomorphic to another graph $H$, which was previously unknown.
Here we prove that this decision problem is NP-complete, even when restricting $H,G$ to be circle graphs, a class of graphs that has a natural relation to vertex-minors.
% As a corollary we also provide efficient algorithms for restricted cases of the $H$-vertex-minor problem, a version of the vertex-minor problem where the graph $H$ is fixed.
\end{abstract}

\section{Introduction}

A central problem in graph theory is the study of `substructures' of graphs.
These substructures of some graph $G$ are usually defined as the graphs which can be reached, from $G$, by a given set of graph operations.
An example of such a substructure to be seriously studied early on, was the graph minor, where the central question was to decide whether a graph $G$ can be transformed into a graph $H$ through the successive application on $G$ of vertex deletions, edge deletions and edge contractions~\cite{Wagner1937}.
If this is the case we call $H$ a minor of $G$.
The problem (\MINOR) of deciding whether a graph $H$ is a minor of $G$ is NP-complete when both $H$ and $G$ are part of the input to the problem~\cite{MATOUSEK1992343}.
However, given a fixed $H$, we can define the problem (\HMINOR) of deciding whether $H$ is a minor of $G$, where only $G$ is part of the input.
As expounded in Robertson \& Seymour's seminal series of papers \cite{RobertsonFullSeries}, \HMINOR\ is solvable in cubic time for any graph $H$.
% For general $G$ and $H$, deciding whether $H$ is a minor of $G$ is NP-complete, but for fixed $H$ the problem is solvable in cubic time, as expounded in Robertson \& Seymour's seminal series of papers \cite{RobertsonFullSeries}.
Since then a great variety of minor-relations has been defined and for many of those the complexity has been studied.
Of particular interest recently are the minor-relations related to the graph operation of local complementation, namely vertex- and pivot-minors.
These two minor structures have been studied within the graph theory community~\cite{Oum2005,jeong2014excluded,kwon2014graphs,geelen2009circle} but have also found surprising applications outside of it, notably in the field of quantum information science~\cite{Dahlberg2018,hahn2018quantum,mhalla2012graph,duncan2013pivoting,van2004efficient,van2004graphical}.
The complexity of the vertex- and pivot-minor decision problems was a notable open problem (see question 7 in \cite{oum2016structural}).
Recently it was proven in~\cite{dabrowski2016recognizing} that the pivot-minor problem is NP-complete if both $G$ and $H$ are part of the input to the problem, but the complexity of the vertex-minor problem was left open.
In \cite{dahlberg2018transform} we proved, in the context of quantum information theory, the NP-completeness of the labeled version of the vertex-minor problem, i.e. the problem of deciding if $H$ is a vertex-minor of the graph $G$, taking labeling into account.
The labeled version of the vertex-minor problem is relevant in the context of quantum information theory since there the vertices of the graph corresponds to physical qubits in, for example, a quantum network.
However we did not discuss the complexity of the related problem of deciding whether $G$ has a vertex-minor isomorphic to $H$ (on any subset of the vertices).
Here we close this gap, proving that the unlabeled version of the vertex-minor problem is also NP-complete.
To avoid confusion with the problems studied in \cite{dahlberg2018transform} we will here call the unlabeled version of the vertex-minor problem \VM.
Moreover we here prove that \VM\ remains NP-complete even when $H$ is restricted to be a star graph and $G$ a circle graph.
Our work resolves the problem left open in~\cite{dabrowski2016recognizing} and provides a partial answer to the questions posed in \cite{oum2016structural}.
In the process of proving hardness we make use of the concept of the semi-ordered Eulerian tour (Soet), a graph construction we introduced in \cite{dahlberg2018transform} that may be of further independent interest.\\

The paper is organized as follows: in \cref{sec:prel} we recall relevant graph theoretic notions such as vertex-minors and circle graphs. We also discuss the concept of semi-ordered Eulerian tours. In \cref{sec:npcomp} we prove the main result: the NP-completeness of the vertex-minor problem.

\section{Preliminaries}\label{sec:prel}
In this section we review relevant graph theoretical notions. We begin by treating the local complementation operation and the notion of vertex-minors before discussing a class of graphs called circle graphs. Here we also introduce the notion of a semi-ordered Eulerian tour and connect it to the existence of star-graph vertex-minors of circle graphs.\\

We will denote graphs by capital letters: $G,H,F,R,..$. Graphs are assumed to be simple unless otherwise indicated. The vertex-set of a graph $G$ is denoted $V(G)$ and the edge-set is denoted $E(G)$. Give a vertex $v$ in a graph $G$ we call denote the neighborhood of $v$ (the set of vertices adjacent to $v$ in $G$) by $N_v$. Given a graph $G$ and a subset of its vertices $V'$ we will denote the induced subgraph of $G$ on those vertices by $G[V']$.
We denote the fully connected graph on $n$ vertices as $K_n$.

We denote words, i.e. ordered sequences of elements of a set (with repetition) by boldface letters, i.e. $\bs{X},\bs{Y}, \bs{Z},...$. We denote the mirroring (reversing of the ordering of its letters) of a word $\bs{X}$ by $\widetilde{\bs{X}}$.
Throughout this paper we use the following notation for sets of consecutive natural numbers
\begin{align}
    [k, n] &\equiv \{i\in\mathbb{N}: k\leq i< n\} \\
    [n] &\equiv \{i\in\mathbb{N}: 0\leq i< n\}
\end{align}

\subsection{Vertex-minors}
We review the definition of local complementation: 

\begin{mydef}[Local complementation]\label{def:LC_prel}
    A local complementation $\tau_v$ is a graph operation specified by a vertex $v$, taking a graph $G$ to $\tau_v(G)$ by replacing the induced subgraph on the neighborhood of $v$, i.e.  $G[N_v]$, by its complement.
    The neighborhood of any vertex $u$ in the graph $\tau_v(G)$ is therefore given by
    \begin{equation}
    N_u^{(\tau_v(G))}=\begin{cases}N_u\Delta (N_v\setminus\{u\}) & \quad \text{if } (u,v)\in E(G) \\ N_u & \quad \text{else},\end{cases}
    \end{equation}
    where $\Delta$ denotes the symmetric difference between two sets.

\end{mydef}
    Given a sequence of vertices $\bs{v}=v_1\dots v_k$, we denote the induced sequence of local complementations, acting on a graph $G$, as
    \begin{equation}
        \tau_{\bs{v}}(G)=\tau_{v_k}\circ\dots\circ\tau_{v_1}(G).
    \end{equation}

A graph $H$ that can be reached from another graph $G$ using local complementations and vertex-deletions is called a \emph{vertex-minor}~\cite{Oum2005} and is formally defined as:

\begin{mydef}[Vertex-minor]\label{def:vertex-minor_prel}
    A graph $H$ is called a vertex-minor of $G$ if there exist a sequence of local complementations and vertex-deletions that takes $G$ to $H$.
    If $H$ is a vertex-minor of $G$ we write this as
    \begin{equation}
        H<G
    \end{equation}
\end{mydef}
Associated to the notion of vertex-minor is the natural decision problem:

\begin{prm}[\VM]
    Given a graph $G$ and a graph $H$ decide whether their exists a graph $\tilde{H}$ such that (1) $H$ and $\tilde{H}$ are isomorphic, and (2) $\tilde{H}<G$.
\end{prm}
We can restrict this problem to a special case, where the graph $H$ is a star-graph on $k$ vertices. We call this problem \SVM. Note that we must only specify $k$ as there exists only one star-graph on $k$ vertices up to isomorphism. Formally we have

\begin{prm}[\SVM]
    Given a graph $G$ and an integer $k$ decide whether there exists a subset $V'$ of $V(G)$ with $|V'|=k$ and a star graph on $V'$ denoted $S_{V'}$ such that $S_{V'}<G$.
\end{prm}

\subsection{Circle graphs}\label{sec:circle}

Here we review circle graphs and representations of these under the action of local complementations.
% Circle graphs are graphs with edges represented as intersections of chords on a circle.
Circle graphs are also sometimes called alternance graphs since they can be described, as explained below, by a double-occurrence word such that the edges of the graph are the given by the alternances induced by this word, .
We will make use of this description here, which was introduced by Bouchet in~\cite{Bouchet1972} and also described in~\cite{Bouchet1994}.
This description is also related to yet another way to represent circle graphs, as Eulerian tours of $4$-regular multi-graphs, introduced by Kotzig in~\cite{Kotzig1977}. 
For an overview of the theory and history of circle graphs see for example the book by Golumbic~\cite{Golumbic2004}.

\subsubsection{Double-occurrence words}
Let us first define double-occurrence words and equivalence classes of these. This will allow us to define circle graphs.

\begin{mydef}[Double occurrence word]\label{def:dow}
    A double-occurrence word $\bs{X}$ is a word with letters in some set $V$, such that each element in $V$ occurs exactly twice in $\bs{X}$.  
\end{mydef}

Given a double-occurrence word $\bs{X}$ we will write $V(\bs{X})=V$ for its set of letters.

\begin{mydef}[Equivalence class of double-occurrence words]\label{def:d_X}
    We say that a double-occurrence word $\bs{Y}$ is equivalent to another $\bs{X}$, i.e. $\bs{Y}\sim \bs{X}$, if $\bs{Y}$ is equal to $\bs{X}$, the mirror $\widetilde{\bs{X}} $ or any cyclic permutation of $\bs{X}$ or $\widetilde{\bs{X}}$.
    We denote by $\bs{d}_{\bs{X}}= \{\bs{Y} \::\:\bs{Y}\sim \bs{X}\}$ the equivalence class of $\bs{X}$, i.e. the set of words equivalent to $\bs{X}$.

\end{mydef}

Next we define alternances of these equivalence classes, which will represent the edges of an alternance graph.

\begin{mydef}[Alternance]\label{def:alternance}
    An \emph{alternance} $(u,v)$ of the equivalence class $\bs{d}_{\bs{X}}$ is a pair of distinct elements $u,v\in V$ such that a double-occurrence word of the form $\dots u\dots v\dots u\dots v\dots$ is in $\bs{d}_{\bs{X}}$.
\end{mydef}

Note that if $(u,v)$ is an alternance of $\bs{d}_{\bs{X}}$ then so is $(v,u)$, since the mirror of any word in $\bs{d}_{\bs{X}}$ is also in $\bs{d}_{\bs{X}}$.

\begin{mydef}[Alternance graph]\label{def:alternance_graph}
    The alternance graph $\mathcal{A}(\bs{X})$ of a double-occurrence word $\bs{X}$ is a graph with vertices $V(\bs{X})$ and edges given exactly by the alternances of $\bs{d}_{\bs{X}}$, i.e.
    \begin{equation}
        E(\mathcal{A}(\bs{X}))=\{(u,v)\in V(\bs{X})\times V(\bs{X})\,:\,(u,v)\text{ is an alternance of }\bs{d}_{\bs{X}}\}.
    \end{equation}
\end{mydef}

Note that since $\mathcal{A}(\bs{X})$ only depends on the equivalence class of $\bs{X}$, the alternance graphs $\mathcal{A}(\bs{X})$ and $\mathcal{A}(\bs{Y})$ are equal if $\bs{X}\sim\bs{Y}$.
Now we can formally define circle graphs.

\begin{mydef}[Circle graph]\label{def:circle_graph}
    A graph $G$ which is the alternance graph of some double-occurrence word $\bs{X}$ is called a circle graph.
\end{mydef}

As an example, consider the following double-occurrence word with letters in the set $V_0=\{a, b, c, d, e\}$:
\begin{equation}\label{eq:example}
    \bs{X}_0 = adcbaebced
\end{equation}
The alternances of $d_{\bs{X}_0}$ are thus
\begin{equation}
    (a,b),\; (a,c),\; (a,d),\; (b,e),\; (c, e)
\end{equation}
and their mirrors. The alternance graph $\mathcal{A}(\bs{X}_0)$ is therefore the graph in \cref{fig:example}.

\begin{figure}[H]
    \centering
    \includegraphics[width=0.25\textwidth]{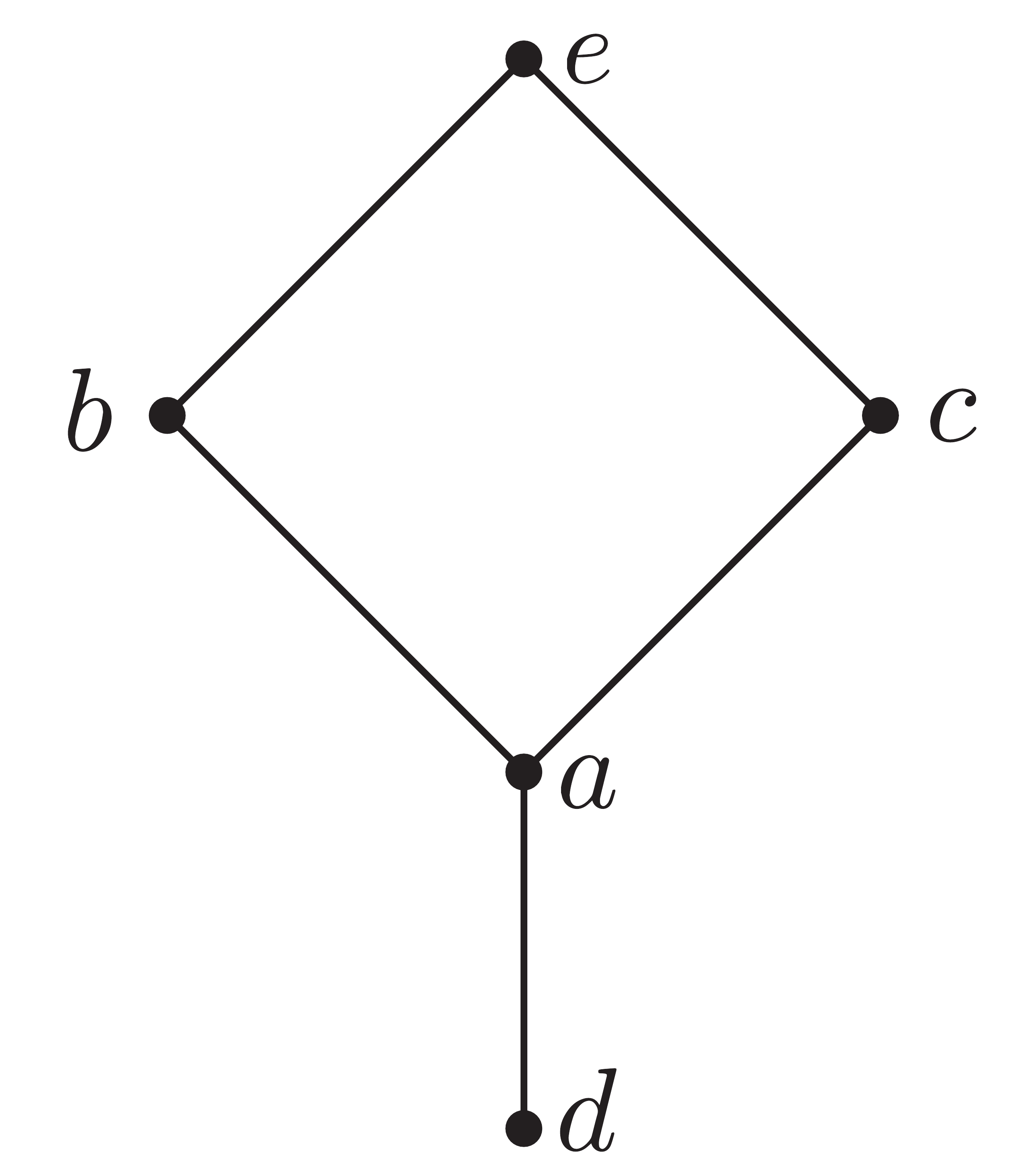}
    \caption{An example of a circle graph induced by the double-occurrence word $adcbaebced$.}
    \label{fig:example}
\end{figure}

\subsubsection{Eulerian tours on 4-regular multi-graphs}
There is yet another way to represent circle graphs, closely related to double-occurrence words, namely as Eulerian tours of $4$-regular multi-graphs.
A $4$-regular multi-graph is a graph where each vertex has exactly four incident edges and can contain multiple edges between each pair of vertices or edges only incident to a single vertex.

\begin{mydef}[Eulerian tour]
    Let $F$ be a connected $4$-regular multi-graph.
    An Eulerian tour $U$ on $F$ is a tour that visits each edge in $F$ exactly once.
\end{mydef}

Any $4$-regular multi-graph is Eulerian, i.e. has a Eulerian tour, since each vertex has even degree~\cite{biggs1976graph}.

Furthermore, any Eulerian tour on a $4$-regular multi-graph $F$ traverses each vertex exactly twice, except for the vertex which is both the start and the end of the tour.
Such a Eulerian tour induces therefore a double-occurrence word, the letters of which are the vertices of $F$, and consequently a circle graph as described in the following definition.

\begin{mydef}[Induced double-occurrence word]\label{def:eul_tour}
    Let $F$ be a connected $4$-regular multi-graph on $k$ vertices $V(F)$.
    Let $U$ be a Eulerian tour on $F$ of the form
    \begin{equation}\label{eq:eul_tour}
        U=x_0e_0x_1\dots x_{2k-2}e_{2k-2}x_{2k-1}e_{2k-1}x_0.
    \end{equation}
    with $x_i\in V(F)$ and $e_i\in E(F)$. Note that every element of $V$ occurs exactly twice in $U$, except $x_0$.
    From a Eulerian tour $U$ as in \cref{eq:eul_tour} we define an induced double-occurrence word as
    \begin{equation}
        m(U)=x_1x_2\dots x_{2k-1}x_{2k}.
    \end{equation}
    To denote the alternance graph given by the double-occurrence word induced by a Eulerian tour, we will write $\mathcal{A}(U)\equiv\mathcal{A}(m(U))$.
\end{mydef}

Similarly to double-occurrence words, we also introduce equivalence classes of Eulerian tours under cyclic permutation or reversal of the tour.

\begin{mydef}[Equivalence class of Eulerian tours]
    Let $F$ be a connected $4$-regular multi-graph and $U$ be an Eulerian tour on $F$.
    We say that an Eulerian tour $U'$ on $F$ is equivalent to $U$, i.e. $U\sim U'$, if $U'$ is equal to $U$, the reversal $\widetilde{U}$ or any cyclic permutation of $U$ or $\widetilde{U}$.
    We denote by $\bs{t}_U$ the equivalence class of $U$, i.e. the set of Eulerian tours on $F$ which are equivalent to $U$.
\end{mydef}

It is clear that if the Eulerian tours $U$ and $U'$ on a $4$-regular multi-graph $F$ are equivalent, then so are the double-occurrence words $m(U)$ and $m(U')$.
Furthermore, as for double-occurrence words, two equivalent Eulerian tours on a connected $4$-regular multi-graph induce the same alternance graph.

Consider for example the $4$-regular multi-graph in \cref{fig:Soet_ex}(a).
This graph has a tour $U_0$ with an induced double-occurrence word
\begin{equation}
    m(U) = adcbaebced
\end{equation}
Note, that this is equal to the word in \cref{eq:example} which shows that $\mathcal{A}(U)$ is also the graph in \cref{fig:example}.

\subsubsection{Vertex-minors of circle graphs}

When we are considering vertex-minors of circle graphs, it is useful to map the operations of local complementation and vertex deletion on an alternance graph of a double-occurrence word to operations on that double-occurrence word. \\

We start by considering local complementation.
Let $\bs{X}=\bs{A}v\bs{B}v\bs{C}$ be a double-occurrence word with alternance graph $\mathcal{A}(\bs{X})$ and let $v$ be an element in $V(\bs{X})$.
Local complementation at the vertex $v$ in the graph $\mathcal{A}(\bs{X})$ now corresponds to the mirroring of the sub-word $\bs{B}$ of $\bs{X}$ in between the two occurrences of $v$, i.e.
\begin{equation}
\tau_{v}\big(\mathcal{A}(\bs{X})\big) = \mathcal{A}(\bs{A}v\widetilde{\bs{B}}v\bs{C})
\end{equation}
Note that both the double-occurrence word $\bs{X} = \bs{A}v\bs{B}v\bs{C}$ and the double-occurrence word $\bs{A}v\widetilde{\bs{B}}v\bs{C}$ arise as words induced by Eulerian tours on the same $4$-regular graph $F$.
One can in fact show~\cite{Bouchet1994} that two circle graphs are equivalent under the action of local complementation if and only if they arise as alternance graphs induced by Eulerian tours on the same $4$-regular multi-graph.

Next we consider vertex deletion.
We will denote by $\bs{X}\setminus v$ the deletion of the element $v$, i.e.
\begin{equation}\label{eq:VD_on_m}
    \bs{X}\setminus v\equiv (\bs{A}v\bs{B}v\bs{C})\setminus v=\bs{A}\bs{B}\bs{C}.
\end{equation}
The resulting word $\bs{ABC}$ is also a double-occurrence word and furthermore we have that
\begin{equation}
    \mathcal{A}(\bs{X})\setminus v=\mathcal{A}(\bs{X}\setminus v).
\end{equation}
If $W=\{w_1,w_2\dots,w_l\}$ is a subset of $V$, we will write $\bs{X}\setminus W$ as the deletion of all elements in $W$, i.e.
\begin{equation}
    \bs{X}\setminus W=(\dots((\bs{X}\setminus w_1)\setminus w_2)\dots)\setminus w_l.
\end{equation}
Connected to this we can also define an induced double-occurrence sub-word $\bs{X}[W]=\bs{X}\setminus (V\setminus W)$.
The reason for calling this an induced double-occurrence sub-word stems from its relation to induced subgraphs of the alternance graph as
\begin{equation}\label{eq:induced_alternance}
    \mathcal{A}(\bs{X})[W]=\mathcal{A}(\bs{X}[W]).
\end{equation}

We can decide if a circle graph has a certain vertex-minor by considering Eulerian tours of a $4$-regular graph, which is captured in the following theorem, a proof of which can be found in~\cite{dahlberg2018transform}. This theorem states that vertex-minors of alternance graphs induced by a Eulerian tour on a $4$-regular graph $F$ are exactly the alternance graphs induced by sub-words formed by Eulerian tours on $F$.

\begin{thm}\label{thm:vm_of_eul}
    Let $F$ be a connected $4$-regular multi-graph and let $G$ be a circle graph such that $G=\mathcal{A}(U)$ for some Eulerian tour $U$ on $F$.
    Then $G'$ is a vertex-minor of $G$ if and only if there exist a Eulerian tour $U'$ on $F$ such that
    \begin{equation}\label{eq:vm_of_eul}
        G'=\mathcal{A}(m(U')[V(G')]).
    \end{equation}
\end{thm}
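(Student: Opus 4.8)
The plan is to prove both directions by translating every operation in the definition of a vertex-minor into an operation on the underlying double-occurrence word, using the two correspondences established above: local complementation at $v$ corresponds to mirroring the subword between the two occurrences of $v$ (and keeps the word among those induced by Eulerian tours of the same $F$), while vertex deletion corresponds to deleting a letter, with the induced-subword identity $\mathcal{A}(\bs{X})[W]=\mathcal{A}(\bs{X}[W])$ of \cref{eq:induced_alternance}.

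For the \emph{if} direction, suppose such a tour $U'$ on $F$ exists. First I would rewrite $\mathcal{A}(m(U')[V(G')])=\mathcal{A}(m(U'))[V(G')]=\mathcal{A}(U')[V(G')]$ using \cref{eq:induced_alternance} together with the convention $\mathcal{A}(U')\equiv\mathcal{A}(m(U'))$. Since $U$ and $U'$ are both Eulerian tours of the same connected $4$-regular multi-graph $F$, the cited characterization of Bouchet (two circle graphs are equivalent under local complementation if and only if they arise as alternance graphs of tours on a common $4$-regular multi-graph) yields a vertex sequence $\bs{v}$ with $\mathcal{A}(U')=\tau_{\bs{v}}(\mathcal{A}(U))=\tau_{\bs{v}}(G)$. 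Then $G'=\tau_{\bs{v}}(G)[V(G')]$ is obtained from $G$ by local complementations followed by deletion of the vertices in $V(G)\setminus V(G')$, so $G'<G$.

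For the \emph{only if} direction, assume $G'<G$. The key step is to put the witnessing sequence of operations into a normal form in which all local complementations precede all vertex deletions, i.e. to establish that $G'=\tau_{\bs{v}}(G)[V(G')]$ for some vertex sequence $\bs{v}$. This rests on the commutation identity $\tau_v(G\setminus w)=\tau_v(G)\setminus w$ for $w\neq v$, which holds because $\tau_v$ only alters the induced subgraph on $N_v$ while deleting $w$ merely removes $w$ together with its incident edges, so the two operations agree even in the overlapping case $w\in N_v$. Since a vertex is never complemented after it has been deleted, every deletion in the sequence can be pushed to the right past all subsequent local complementations without changing the final graph, yielding the claimed normal form. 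Once this is in hand, $\tau_{\bs{v}}(G)$ is local-complementation equivalent to $G=\mathcal{A}(U)$, so by Bouchet's characterization it equals $\mathcal{A}(U')$ for some Eulerian tour $U'$ on $F$; equivalently, $\bs{v}$ corresponds to a sequence of subword mirrorings carrying $m(U)$ to $m(U')$. Applying \cref{eq:induced_alternance} once more gives $G'=\mathcal{A}(U')[V(G')]=\mathcal{A}(m(U')[V(G')])$, as required.

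I expect the main obstacle to be the normal-form reordering in the only-if direction: carefully verifying the commutation identity in the case $w\in N_v$ and arguing that deferring all deletions to the end leaves the resulting graph unchanged. Both appeals to the local-complementation/mirroring correspondence and to Bouchet's equivalence criterion are then essentially bookkeeping on top of this reordering.
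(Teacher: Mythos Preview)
The paper does not actually prove this theorem; it only states it and refers the reader to~\cite{dahlberg2018transform} for a proof. Your argument is correct and is in fact the standard one: use Bouchet's characterization to identify local complementations with passing between Eulerian tours of the same $4$-regular multi-graph, use \cref{eq:induced_alternance} to identify vertex deletion with letter deletion, and in the only-if direction push all deletions past the local complementations via the commutation $\tau_v(G\setminus w)=\tau_v(G)\setminus w$ for $w\neq v$. There is nothing further to compare.
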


\subsubsection{Semi-Ordered Eulerian tours}\label{sec:soet}

From the previous sections we have seen that circle graphs and their vertex-minors can be described by Eulerian tours on connected $4$-regular multi-graphs.
One can thus ask, given a graph $H$, what properties a $4$-regular multi-graph $F$ must possess such that any of its alternance graphs\footnote{Note that if $H<\mathcal{A}(U)$ for some Eulerian tour $U$ on $F$ then $H<\mathcal{A}(U')$ for all Eulerian tours $U'$ on $F$.} has $H$ as a vertex-minor.
We answered this question in~\cite{Dahlberg2018} for the case when $H$ is a star graph by introducing the notion of a Semi-ordered Eulerian Tour (Soet), defined as

\begin{mydef}[Soet]\label{def:Soet}
    Let $F$ be a $4$-regular multi-graph and let $V'\subseteq V(F)$ be a subset of its vertices.
    Furthermore, let $\bs{s}=s_0s_1\dots s_{k-1}$ be a word with letters in $V'$ such that each element of $V'$ occurs exactly once in $\bs{s}$ and where $k=\abs{V'}$.
    A semi-ordered Eulerian tour $U$ with respect to $V'$ is a Eulerian tour such that $m(U)=s_0\bs{X}_0s_1\dots s_{k-1}\bs{X}_{k-1}s_0\bs{Y}_{0}s_1\dots s_{k-1}\bs{Y}_{k-1}$ and where $\bs{X}_0,\bs{X}_1,\dots,\bs{X}_{k-1},\bs{Y}_0,\dots,\bs{Y}_{k-1}$ are words (possibly empty) with letters in $V(F)\setminus V'$.
    This can also be stated as $m(U)[V']=\bs{s}\bs{s}$, for a word $\bs{s}$.
\end{mydef}
Note that the multi-graph $F$ is not assumed to be simple, so multi-edges and self-loops are allowed.
A Soet is a Eulerian tour on $F$ that traverses the elements of $V'$ in some order once and then again in the same order.
The particular order in which the Soet~traverses $V'$ will not be important here, only that it traverses $V'$ in the same order twice.
As an example, the graph in \cref{fig:Soet_ex}(a) allows for a Soet with respect to the set $\{a,b,c,d\}$ but the graph in \cref{fig:Soet_ex}(b) does not.

\begin{figure}[H]
    \centering
    \begin{subfigure}{0.35\textwidth}
        \includegraphics[width=1\textwidth]{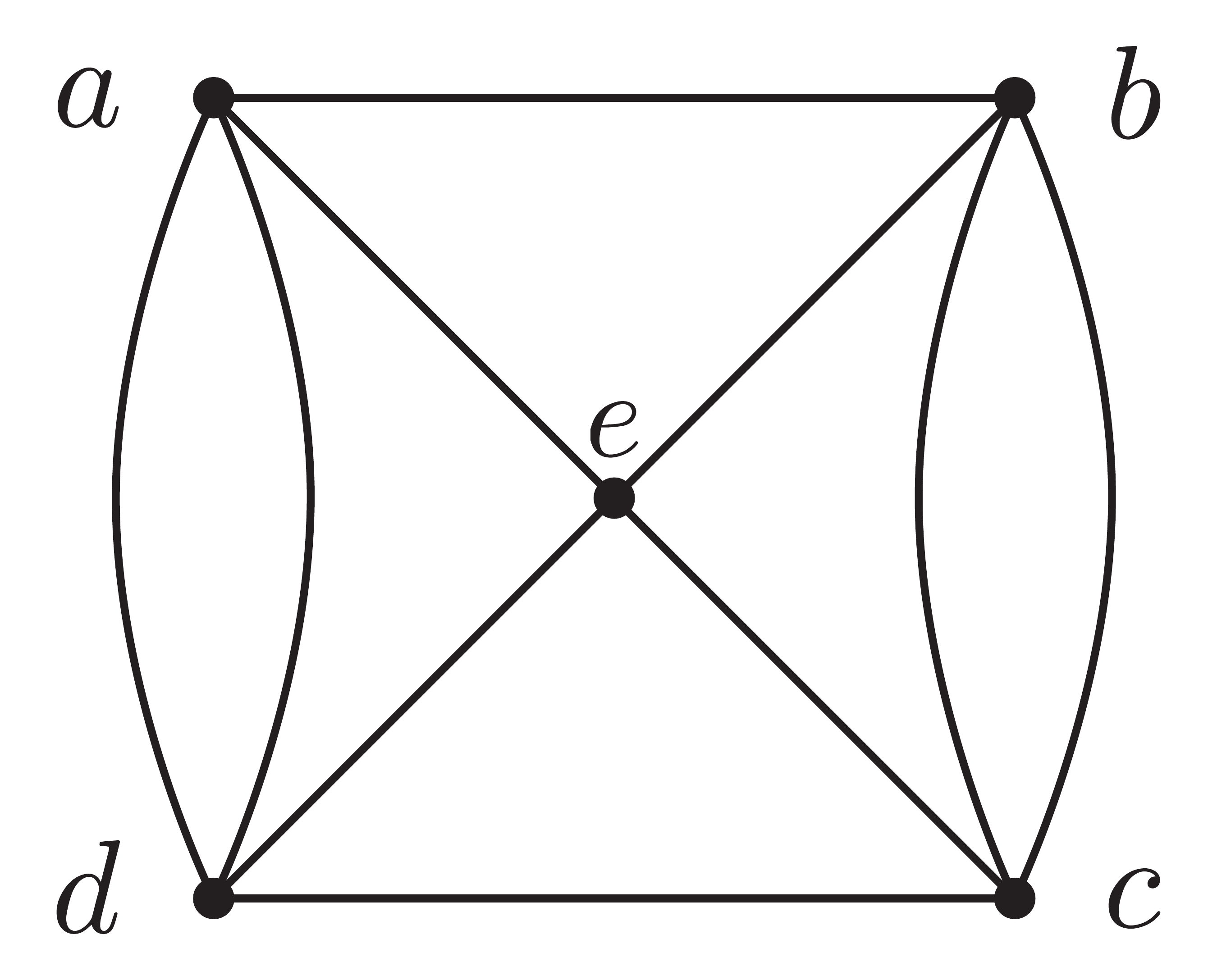}
        \caption{}
        \label{fig:Soet_yes}
    \end{subfigure}
    \hspace{2cm}
    \begin{subfigure}{0.35\textwidth}
        \includegraphics[width=1\textwidth]{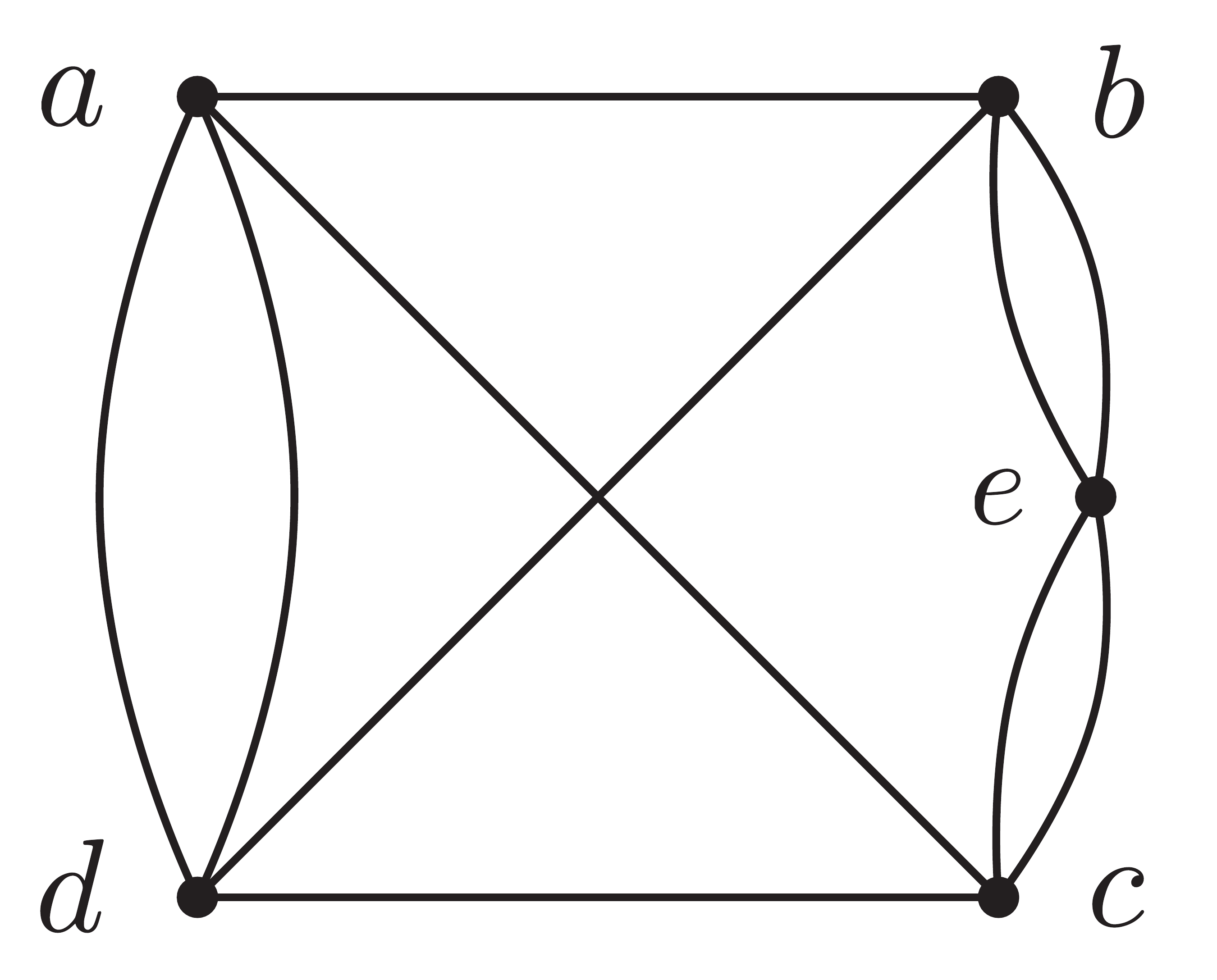}
        \caption{}
        \label{fig:Soet_no}
    \end{subfigure}
    \caption{
        Examples of two $4$-regular multi-graphs.
        \Cref{fig:Soet_yes} is an example of a graph that allows for a Soet with respect to the set $V'=\{a,b,c,d\}$.
        A Soet\ for this graph is for example $m(U)=abcdaebced$.
        The graph in \cref{fig:Soet_no} on the other hand does not allow for any Soet with respect to the set $V'=\{a,b,c,d\}$.
    }
    \label{fig:Soet_ex}
\end{figure}
 
The following theorem, a trivial corollary of Corollary 2.6.1 in~\cite{Dahlberg2018}, connects the problem of finding star graphs as vertex-minors of circle graphs to the problem of finding Soets on $4$-regular multi-graphs.

\begin{thm}\label{cor:reg_to_circle}
    Let $F$ be a connected $4$-regular multi-graph and let $G$ be a circle graph given by the alternance graph of a Eulerian tour $U$ on $F$, i.e. $G=\mathcal{A}(U)$.
    Furthermore let $S_{V'}$ be a star graph on the vertices $V'$.
    Then $S_{V'}<G$ if and only if $F$ allows for a Soet~(see \cref{def:Soet}) with respect to $V'$.
\end{thm}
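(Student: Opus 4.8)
The plan is to deduce the statement from \cref{thm:vm_of_eul} together with a clean characterization of which double-occurrence words have a \emph{complete} alternance graph, exploiting the fact that the star graph $S_{V'}$ and the complete graph $K_{V'}$ on $V'$ are locally equivalent. First I would reduce the vertex-minor condition to a statement about tour-words. By \cref{thm:vm_of_eul}, $S_{V'}<G$ holds if and only if there is a Eulerian tour $U'$ on $F$ with $S_{V'}=\mathcal{A}(m(U')[V'])$. A single local complementation at the centre $c$ of the star turns $S_{V'}$ into the complete graph $K_{V'}$ on the vertex set $V'$; since local complementation is its own inverse and is among the operations defining the vertex-minor relation, transitivity gives $S_{V'}<G$ if and only if $K_{V'}<G$. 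Applying \cref{thm:vm_of_eul} once more, the latter holds if and only if there is a Eulerian tour $U'$ on $F$ with $K_{V'}=\mathcal{A}(m(U')[V'])$. Since $m(U')[V']$ is a double-occurrence word on $V'$, the whole theorem reduces to the following claim: the alternance graph $\mathcal{A}(\bs{w})$ of a double-occurrence word $\bs{w}$ on $V'$ (with $k=\abs{V'}$) equals $K_{V'}$ if and only if $\bs{w}=\bs{s}\bs{s}$ for some word $\bs{s}$ containing each element of $V'$ exactly once -- which is exactly the defining condition of a Soet in \cref{def:Soet}.

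To prove this claim it is convenient to view $\bs{w}=w_0w_1\dots w_{2k-1}$ as a chord diagram: place the $2k$ positions on a circle and join the two positions carrying the same letter by a chord. By \cref{def:alternance}, a pair $(u,v)$ is an alternance (an edge of $\mathcal{A}(\bs{w})$) exactly when their chords cross, so $\mathcal{A}(\bs{w})=K_{V'}$ is equivalent to all $k$ chords being pairwise crossing. The easy direction is a direct computation: if $\bs{w}=\bs{s}\bs{s}$ then the two occurrences of the $i$-th letter sit at positions $i$ and $i+k$, and for $i<j$ the four positions $i,j,i+k,j+k$ spell out $uvuv$, so every pair is an alternance and the graph is complete.

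For the converse I would show that pairwise crossing forces every chord to be a ``diameter''. Fix an arbitrary chord with endpoints $p<q$; every other chord must cross it and therefore place exactly one of its two endpoints strictly between $p$ and $q$. Each of the remaining $k-1$ chords contributes exactly one such interior endpoint and these endpoints are distinct; conversely every one of the $q-p-1$ interior positions is an endpoint of some chord other than the fixed one. This sets up a bijection and yields $q-p-1=k-1$, i.e. $q-p=k$. Hence every chord joins positions $i$ and $i+k$, so $w_i=w_{i+k}$ for all $i$ and $\bs{w}=\bs{s}\bs{s}$ with $\bs{s}=w_0\dots w_{k-1}$, each letter appearing once in $\bs{s}$. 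This establishes the claim, and combining it with the reduction above finishes the proof.

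The main obstacle is precisely this converse direction: showing that a complete alternance graph forces the rigid doubled-word structure $\bs{s}\bs{s}$. The counting argument on the interior endpoints of a fixed chord is the crux, and the delicate point is to carry it out cyclically -- correctly accounting for the two arcs a chord determines on the circle rather than arguing only along a linear reading of $\bs{w}$. The remaining ingredients, namely the reduction through \cref{thm:vm_of_eul} and the local equivalence of $S_{V'}$ and $K_{V'}$, are routine once the claim is in hand.
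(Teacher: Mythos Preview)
Your argument is correct. The paper does not actually supply a proof of this theorem; it simply records it as ``a trivial corollary of Corollary~2.6.1'' in an earlier paper by the same authors, so there is no in-paper argument to compare against in detail.

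Your self-contained route is clean and works. The passage from $S_{V'}$ to $K_{V'}$ via a single local complementation at the centre is the right simplification: it turns the awkward question ``when is $\mathcal{A}(\bs{w})$ a star?'' into the symmetric one ``when is $\mathcal{A}(\bs{w})$ complete?'', which admits the pleasant answer $\bs{w}=\bs{s}\bs{s}$. The invocation of \cref{thm:vm_of_eul} on both sides is exactly what that theorem is set up for.

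On the point you flag as delicate: the counting argument is already fine as written. Fixing any linear reading of $\bs{w}$ and any chord with endpoints $p<q$, the $q-p-1$ positions strictly between them are precisely the ``interior'' endpoints of the $k-1$ other chords (one each, by pairwise crossing), forcing $q-p=k$. Since this holds for \emph{every} chord in that same linear reading, one obtains $w_i=w_{i+k}$ for all $i\in[k]$ and hence $\bs{w}=\bs{s}\bs{s}$ on the nose, not merely up to cyclic equivalence. (Equivalently, cyclic shifts and mirrors of $\bs{s}\bs{s}$ are again of the form $\bs{s}'\bs{s}'$, so the cyclic ambiguity is harmless either way.) No further cyclic bookkeeping is needed.
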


This gives rise to a natural decision problem which we denote \SOET:

\begin{prm}[\SOET]\label{prob:Soet}
Let $F$ be a $4$-regular multi-graph and let $k\leq |V(F)|$ be an integer. Decide whether there is a $V'\subset V(F)$ with $|V'|=k$ such that there exists a Soet~$U$ on $F$ with respect to the set $V'$.
\end{prm}

In~\cite{dahlberg2018transform} we proved that a version of \cref{prob:Soet} where $V'$ is part of the input to the problem, is NP-complete.
In the next section we prove that also the problem of deciding whether such a $V'$ exists, i.e. \cref{prob:Soet}, is also NP-complete.

One can see that a Soet on a $4$-regular multi-graph $F$ with respect to $V'$, imparts an ordering on the subset of vertices $V'$. We will in particular be interested in vertices in $V'$ that are `consecutive' with respect to the Soet. Consecutiveness is defined as follows.

\begin{mydef}[Consecutive vertices]\label{def:consecutive}
    Let $F$ be a $4$-regular graph and $U$ a~Soet~on $F$ with respect to a subset $V'\subseteq V(F)$.
    Two vertices $u,v\in V'$ are called consecutive in $U$ if there exist a sub-word $u\bs{X}v$ or $v\bs{X}u$ of $m(U)$ such that no letter of $\bs{X}$ is in $V'$.
\end{mydef}

We also define the notion of a ``maximal sub-word" associated with two consecutive vertices.

\begin{mydef}[Maximal sub-words]\label{def:maximal}
    Let $F$ be a $4$-regular multi-graph and $U$ a Soet on $F$ with respect to a subset $V'\subseteq V(F)$.
    The double-occurrence word induced by $U$ is then of the form $m(U)=s_0\bs{X}_0s_1\dots s_{k-1}\bs{X}_{k-1}s_0\bs{Y}_{0}s_1\dots s_{k-1}\bs{Y}_{k-1}$, where $k=\abs{V'}$, $\{s_0,\dots,s_{k-1}\}=V'$ and $\bs{X}_0,\dots,\bs{X}_{k-1},
    \bs{Y}_0,\dots,\bs{Y}_{k-1}$ are words (possibly empty) with letters in $V(F)\setminus V'$.\\
    For $i\in[k]$, we call $\bs{X}_i$ and $\bs{Y}_i$ the two maximal sub-words associated with the consecutive vertices $s_{i}$ and $s_{(i+1\pmod k)}$.
\end{mydef}
    Given two consecutive vertices $u$ and $v$, we will denote their two maximal sub-words as $\bs{X}$ and $\bs{X}'$, $\bs{Y}$ and $\bs{Y}'$ or similar.

 \section{NP-completeness of the vertex-minor problem}\label{sec:npcomp}
In this section we prove the NP-completeness of the vertex-minor problem. This we do in three steps.
We will begin by (1) proving that \SOET\ is NP-Hard. We do this by reducing the problem of deciding whether a $3$-regular graph $R$ is Hamiltonian to \SOET. Next we (2) reduce \SOET\ to \SVM\ and \SVM\ to \VM, thus proving the NP-hardness of \VM. Finally we (3) show that \VM\ is also in NP.

\subsection{SOET is NP-hard}
We first review the definition of a Hamiltonian graph and the associated \CH\ decision problem.

 \begin{mydef}[Hamiltonian]\label{def:hamtour}
    A graph is said to be Hamiltonian if it contains a Hamiltonian cycle.
    A Hamiltonian cycle is a cycle that visits each vertex in the graph exactly once.
\end{mydef}
\begin{prm}[\CH]\label{prob:cubham}
    Let $R$ be a $3$-regular graph. Decide whether $R$ is Hamiltonian.
\end{prm}

The reduction of \CH~to \SOET\ is done by going though the following steps.
\begin{enumerate}
    \item Introduce the notion of a ($4$-regular) $K_3$-expansion $\Lambda(R)$ of a $3$-regular graph $R$. This is done in \cref{def:triangular}.

    \item Prove that if a $3$-regular graph $R$ is Hamiltonian then the $K_3$-expansion $\Lambda(R)$ of $R$ allows for a Soet~of size $2|V(R)|$. This is done in \cref{lem:ham_to_soet}
    \item Prove that if the $K_3$-expansion $\Lambda(R)$ of a $3$-regular graph $R$ allows for a Soet\ of size $2|V(R)|$ then $R$ is Hamiltonian. This is done in \cref{lem:soet_to_ham}
\end{enumerate} 

Note that 1. and 2. above provides necessary and sufficient condition for whether a $3$-regular graph $R$ is Hamiltonian in terms of whether $\Lambda(R)$ allows for a Soet of a certain size.
This implies that \CH\ reduces to \SOET\ and hence that \SOET\ is NP-hard.

We begin by introducing the $K_3$-expansion: a mapping from $3$-regular graphs to $4$-regular multi-graphs.

\begin{mydef}[$K_3$-expansion]\label{def:triangular}
    Let $R$ be a $3$-regular graph.
    A $K_3$-expansion $\Lambda(R)$ of a $3$-regular graph $R$ is constructed from $R$ by performing the following two steps:
    \begin{enumerate}
        \item Replace each vertex $v$ in $R$ with a subgraph isomorphic to $K_3$  as below
            \begin{equation}
                \raisebox{-0.13\textwidth}{\includegraphics[scale=0.2]{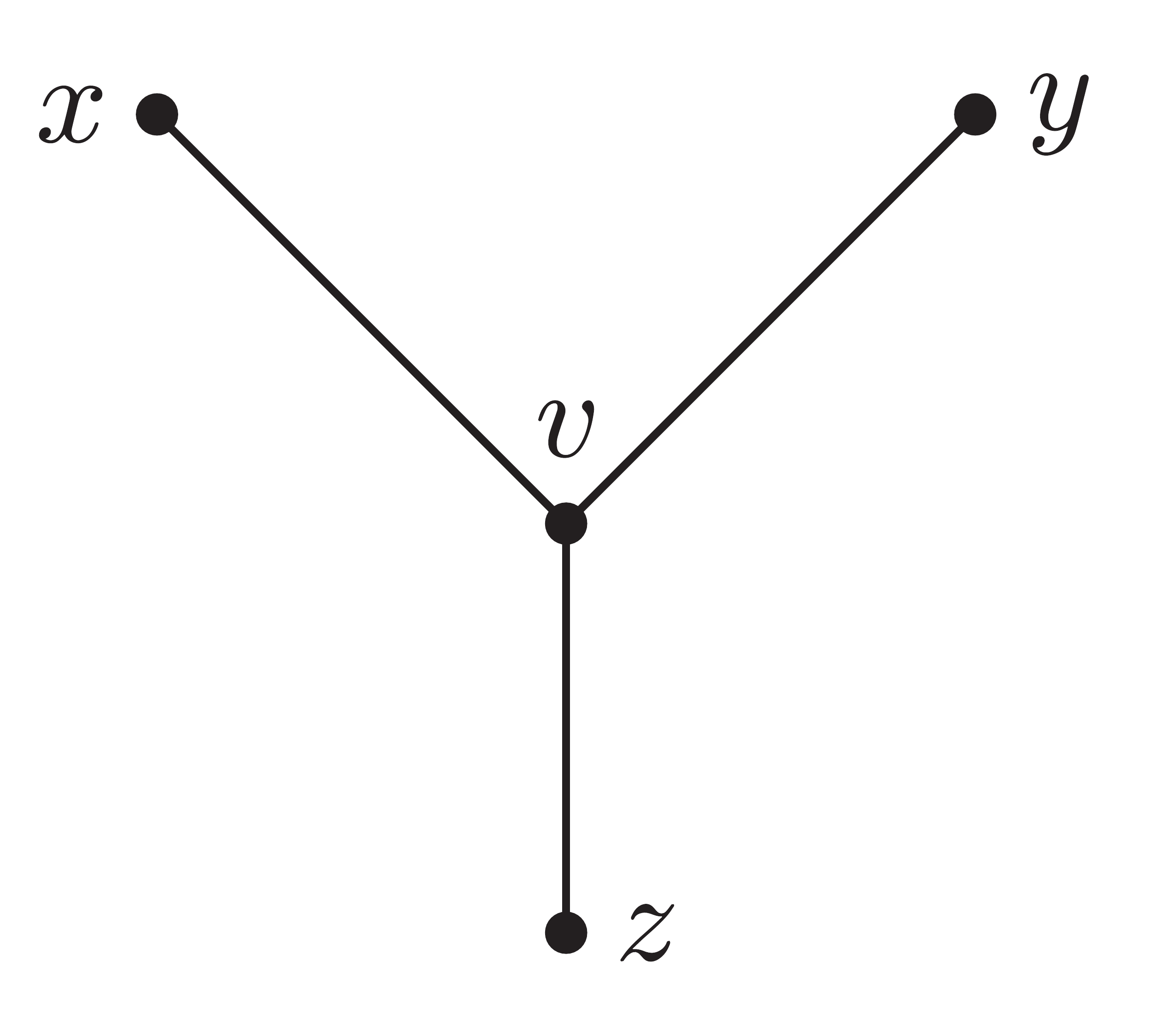}}\quad\implies\quad \raisebox{-0.13\textwidth}{\includegraphics[scale=0.2]{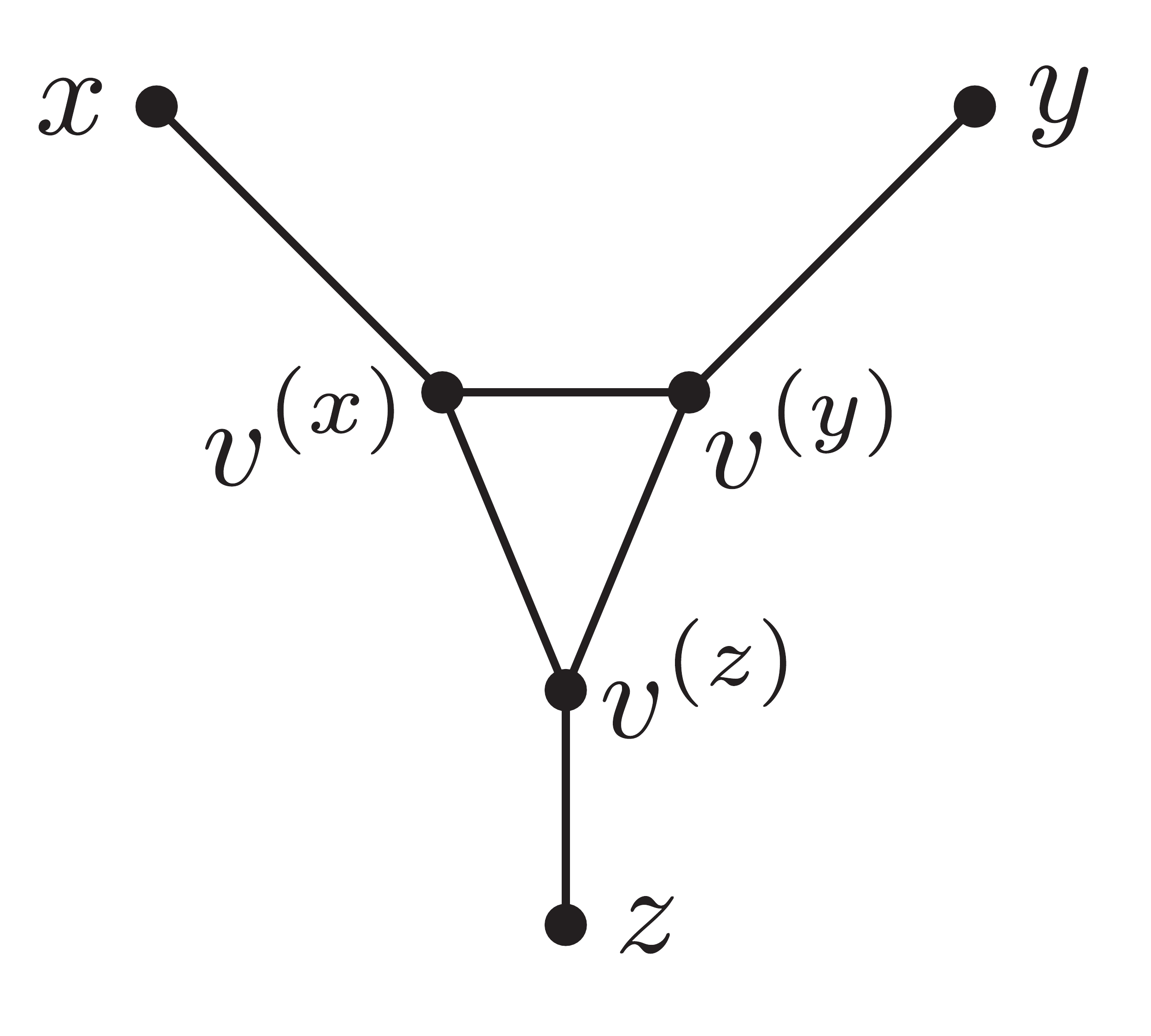}}
            \end{equation}
            
            where $x$, $y$ and $z$ are the neighbors of $v$. We will denote the $K_3$-subgraph associated to the vertex $v$ with $T_v$, i.e. $T_v=G[\{v^{(x)},v^{(y)},v^{(z)}\}]$.
        \item For all $v,v'\in R$ such that $v\neq v'$, double the edge that is incident on two subgraphs $T_v,T_{v'}$.
    \end{enumerate}
\end{mydef}
The graph $\Lambda(R)$ will be called a \emph{$K_3$-expansion} of $R$. A multi-graph $F$ that is the $K_3$-expansion of some $3$-regular graph $R$ will also be referred to as a $K_3$-expanded graph.
Furthermore, the number of vertices in $\Lambda(R)$ is $3\cdot\abs{V(R)}$ and the number of edges is $2\cdot\abs{E(R)}+3\cdot\abs{V(R)}$. In \cref{fig:triangular_expansion_example} we show an example of a $3$-regular graph and its $K_3$-expansion.

\begin{figure}[H]
    \centering
    \begin{subfigure}{0.4\textwidth}
        \includegraphics[width=\textwidth]{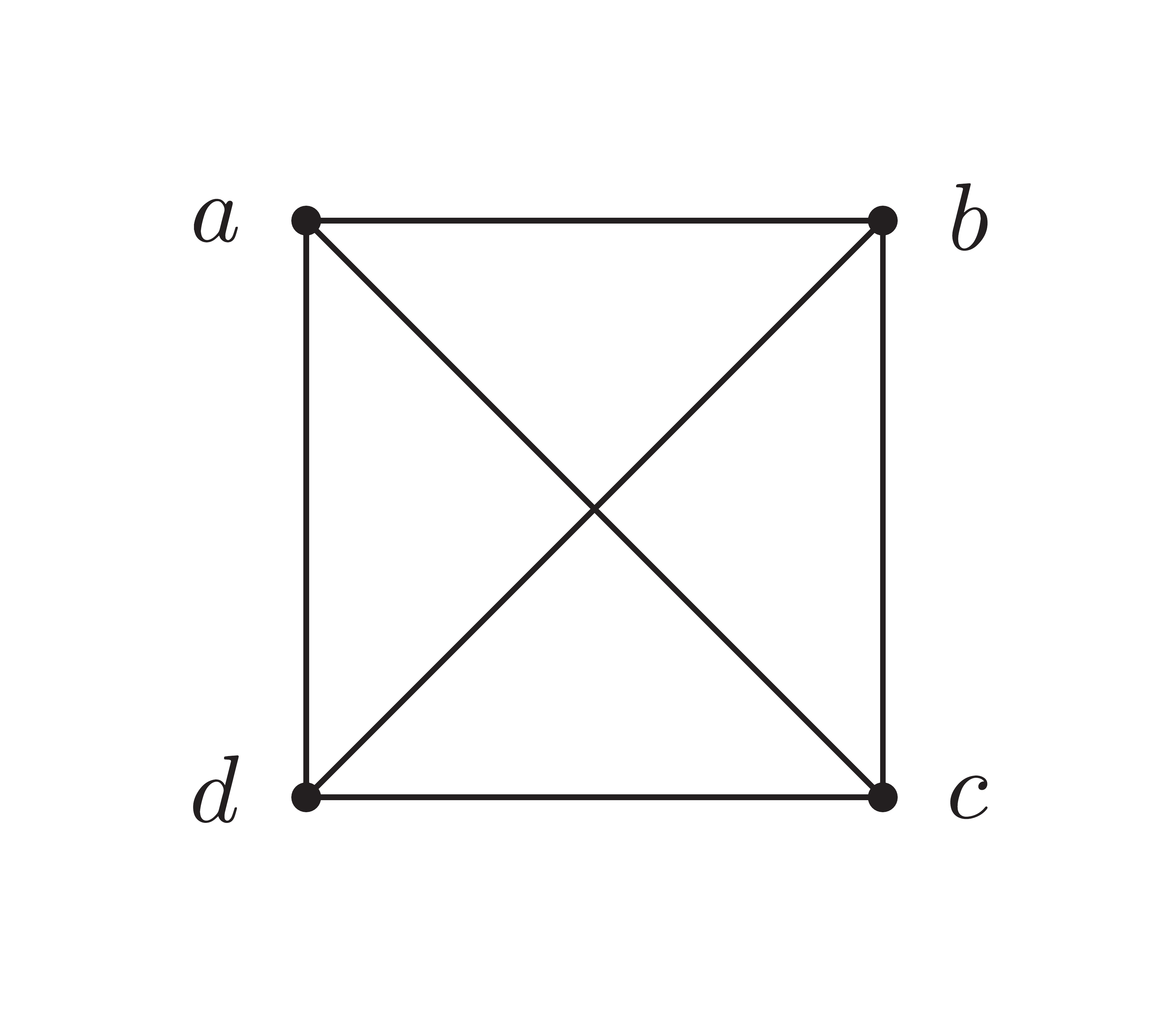}
        \caption{}
    \end{subfigure}
    ~$\quad\implies\quad$
    \begin{subfigure}{0.4\textwidth}
        \includegraphics[width=\textwidth]{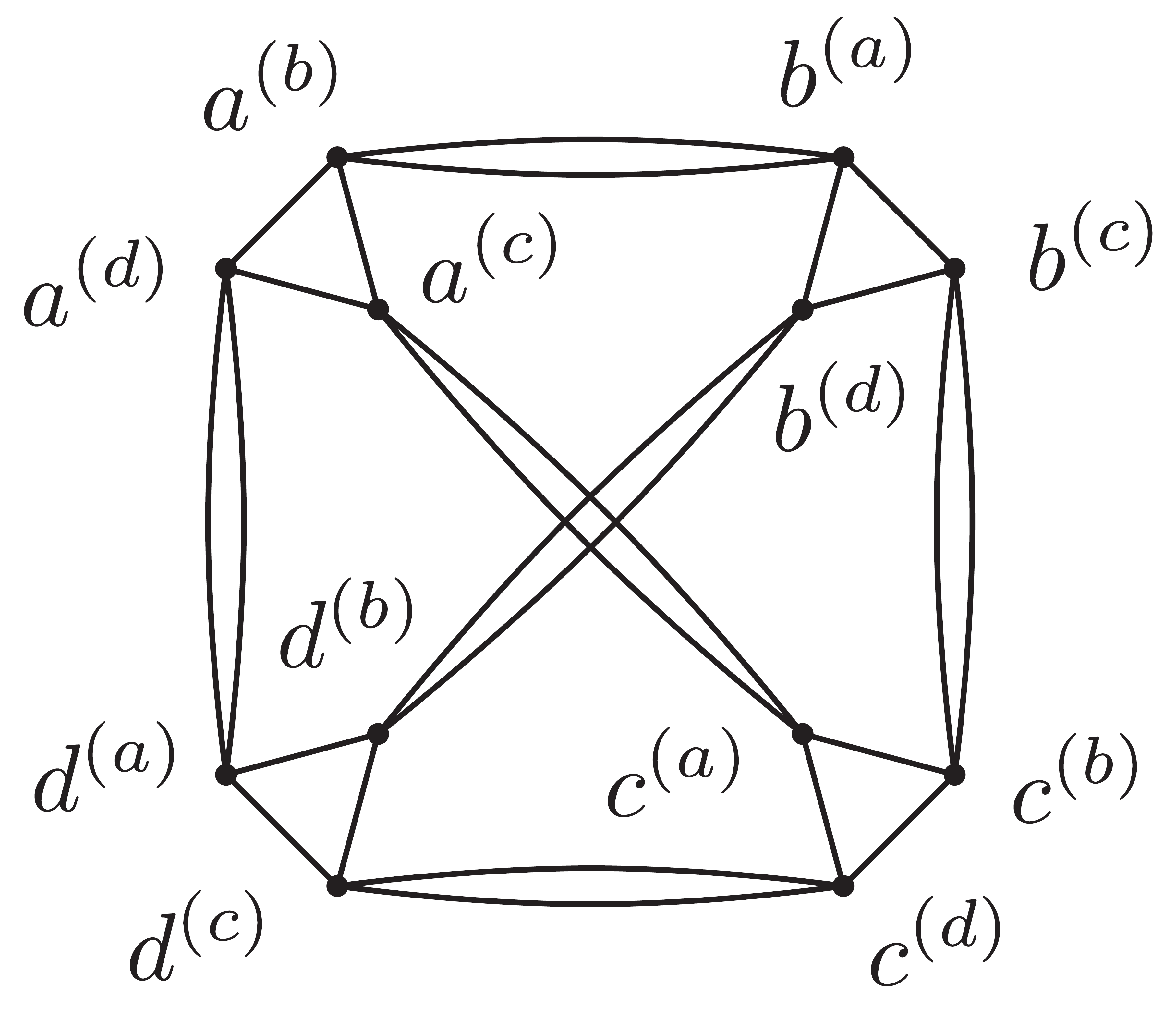}
        \caption{}
    \end{subfigure}
    \caption{Figure showing (a) the complete graph $K_V$ on vertices $V=\{a,b,c,d\}$ and (b) its associated $K_3$-expansion $\Lambda(K_V)$.}
    \label{fig:triangular_expansion_example}
\end{figure}

 We now argue that if a $3$-regular graph $R$ is Hamiltonian then its $K_3$-expansion allows for a Soet on $2|V(R)|$ vertices and thus providing a necessary condition for a $3$-regular graph being Hamiltonian.

\begin{lem}\label{lem:ham_to_soet}
Let $R$ be a triangular graph with $k$ vertices  and let $\Lambda(R)$ be its $K_3$-expansion. If $R$ is Hamiltonian then $\Lambda(R)$ allows for a Soet of size $2k$.
\end{lem}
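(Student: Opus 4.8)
The plan is to read a Soet off a Hamiltonian cycle of $R$ essentially for free. Fix a Hamiltonian cycle $C=v_0v_1\cdots v_{k-1}$ of $R$ (indices mod $k$). Each $v_i$ has exactly three neighbours; two of them, $v_{i-1}$ and $v_{i+1}$, lie on $C$, and the third, call it $w_i$, does not. Inside the triangle $T_{v_i}$ of the $K_3$-expansion (see \cref{def:triangular}) this singles out two \emph{on-cycle} vertices $a_i:=v_i^{(v_{i-1})}$ and $b_i:=v_i^{(v_{i+1})}$ and one \emph{off-cycle} vertex $c_i:=v_i^{(w_i)}$. I would set $V'=\{a_i,b_i:i\in[k]\}$, so that $\abs{V'}=2k$, and try to construct an Eulerian tour $U$ on $\Lambda(R)$ with $m(U)[V']=\bs{s}\bs{s}$ for $\bs{s}=a_0b_0a_1b_1\cdots a_{k-1}b_{k-1}$; by \cref{def:Soet} such a $U$ is exactly a Soet of size $2k$. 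A convenient preliminary fact is that the off-cycle edges of $R$ form a perfect matching on $V(R)$ (each vertex has exactly one, since $R$ is $3$-regular so $k$ is even), whence the off-cycle doubled edges of $\Lambda(R)$ pair up the vertices $c_i$; write $c_{\sigma(i)}$ for the partner of $c_i$.

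I would then describe $U$ as two passes around $C$. In the first pass, for each $i$ I traverse the triangle edge $a_ib_i$ and then one of the two parallel copies of the doubled cycle edge from $b_i$ to $a_{i+1}$; this produces the subword $a_0b_0\cdots a_{k-1}b_{k-1}$ and returns to $a_0$. In the second pass I go round in the same cyclic order, now entering $T_{v_i}$ at $a_i$ along the edge to $c_i$, leaving along the edge from $c_i$ to $b_i$, and crossing to $a_{i+1}$ along the \emph{other} parallel copy of the cycle edge. Because no $c_i$ lies in $V'$, the second pass again contributes $a_0b_0\cdots a_{k-1}b_{k-1}$, so the restriction $m(U)[V']$ is $\bs{s}\bs{s}$, as wanted.

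Two points need care, and this is where I expect the actual work to lie. First, the two passes must be glued into a \emph{single} closed tour rather than two disjoint loops. I would handle this by fixing, at every vertex, the pairing of its four incident edges into two transitions (first-pass edges paired together, second-pass edges paired together), and then swapping the two pairings at the single base vertex $a_0$, so that the end of the first pass flows into the start of the second and the end of the second closes the tour. Second, the doubled off-cycle edges joining each matched pair $c_i,c_{\sigma(i)}$ must be covered without disturbing the order on $V'$. I would designate one endpoint of each matched pair as \emph{active} and insert both parallel off-cycle copies as a back-and-forth excursion $c_i\to c_{\sigma(i)}\to c_i$ between the $a_i$- and $b_i$-steps of the second pass; since $c_i,c_{\sigma(i)}\notin V'$ this leaves $m(U)[V']$ untouched, while the detour keeps those edges on the main tour. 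The main obstacle is therefore purely the bookkeeping of this transition system: once all pairings are fixed I must verify that the induced $2$-in/$2$-out structure at every vertex is connected, i.e.\ that it covers all $6k$ edges as one Eulerian tour and that neither pass nor any excursion splits off as a separate closed walk.
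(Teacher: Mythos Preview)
Your proposal is correct and essentially identical to the paper's proof: the paper also takes $V'$ to be the $2k$ on-cycle triangle vertices, builds the tour from the same two passes (one using the $a_ib_i$ triangle edges, one detouring through $c_i$), and then patches in the missing off-cycle doubled edges by exactly the $c_i\to c_{\sigma(i)}\to c_i$ excursions you describe. Your explicit treatment of the gluing at $a_0$ and the perfect-matching structure of the off-cycle edges is in fact more careful than the paper's own writeup, which handles the concatenation implicitly by writing the two passes as open trails with coinciding endpoints.
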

\begin{proof}
    Let $M$ be a Hamiltonian tour on $R$. Choose $x_0 \in V(R)$ and let $\mathbf{L} = x_0 x_1 \cdots x_{k-1}$ be the word formed by walking along $M$ when starting on $x_0$. Note that $x_i,x_{(i+1\pmod k)}$ are adjacent in $R$ for all $i\in [k]$. Now consider the $K_3$-expansion $\Lambda(R)$ of $R$. We will argue that $\Lambda(R)$ allows for a Soet with respect to the set $V'= \{x_0^{(x_{k-1})},x_0^{(x_1)}, x_1^{(x_0)}, x_1^{(x_2)}, \ldots ,x_{k-1}^{(x_{k-2})},x_{k-1}^{(x_0)}\}$. For all $i \in [k]$ let $v_i$ be the unique vertex adjacent to $x_i$ in $\Lambda(R)$ that is not $x_{(i-1\pmod k)}$ or $x_{(i+1\pmod k)}$. Now consider the following words on $V(\Lambda(R))$. 
\begin{align}
    \mathbf{V} &:= x_0^{x_{k-1}}x_0^{(x_1)}x_1^{(x_0)} x_1^{(x_2)}x_2^{(x_1)}x_2^{(x_3)}\ldots x_{k-1}^{({x_{k-2}})}x_{k-1}^{(x_0)}\label{eq:trailV}\\
    \mathbf{W} &:= x_0^{x_{k-1}}x_0^{(v_0)}x_0^{(x_1)}x_1^{(x_0)}x_1^{(v_1)}x_1^{(x_2)}x_2^{(x_1)}x_2^{(v_2)}x_2^{(x_3)}\ldots x_{k-1}^{({x_{k-2}})}x_{k-1}^{(v_{k-1})}x_{k-1}^{(x_0)}\label{eq:trailW}
\end{align}
    These words describe disjoint trails on $\Lambda(R)$ as illustrated for an example graph in \cref{fig:triangular_expansion_paths}.

    \begin{figure}[H]
        \centering
        \begin{subfigure}{0.4\textwidth}
            \includegraphics[width=\textwidth]{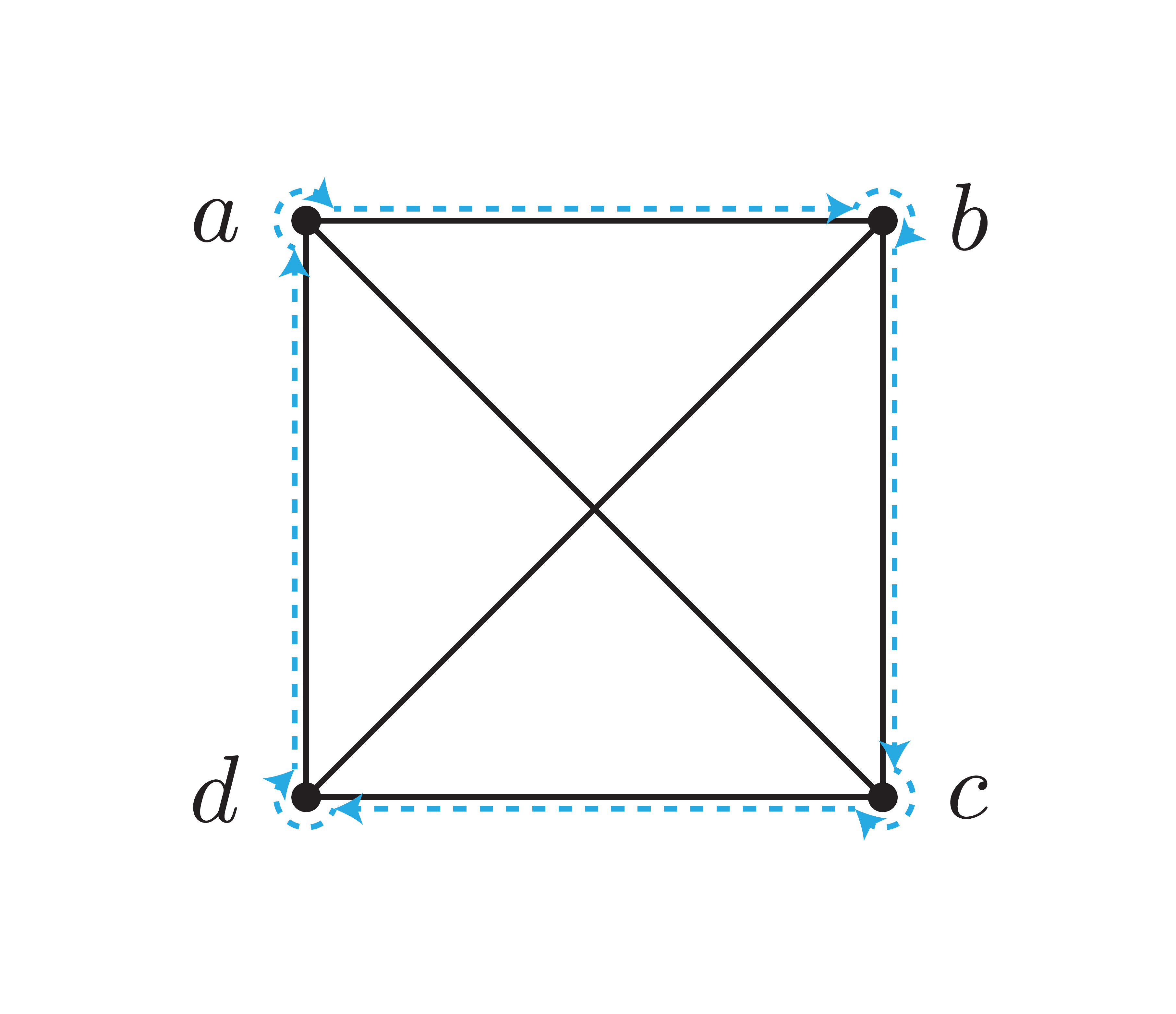}
            \caption{}
        \end{subfigure}
        ~$\quad\quad\quad$
        \begin{subfigure}{0.4\textwidth}
            \includegraphics[width=\textwidth]{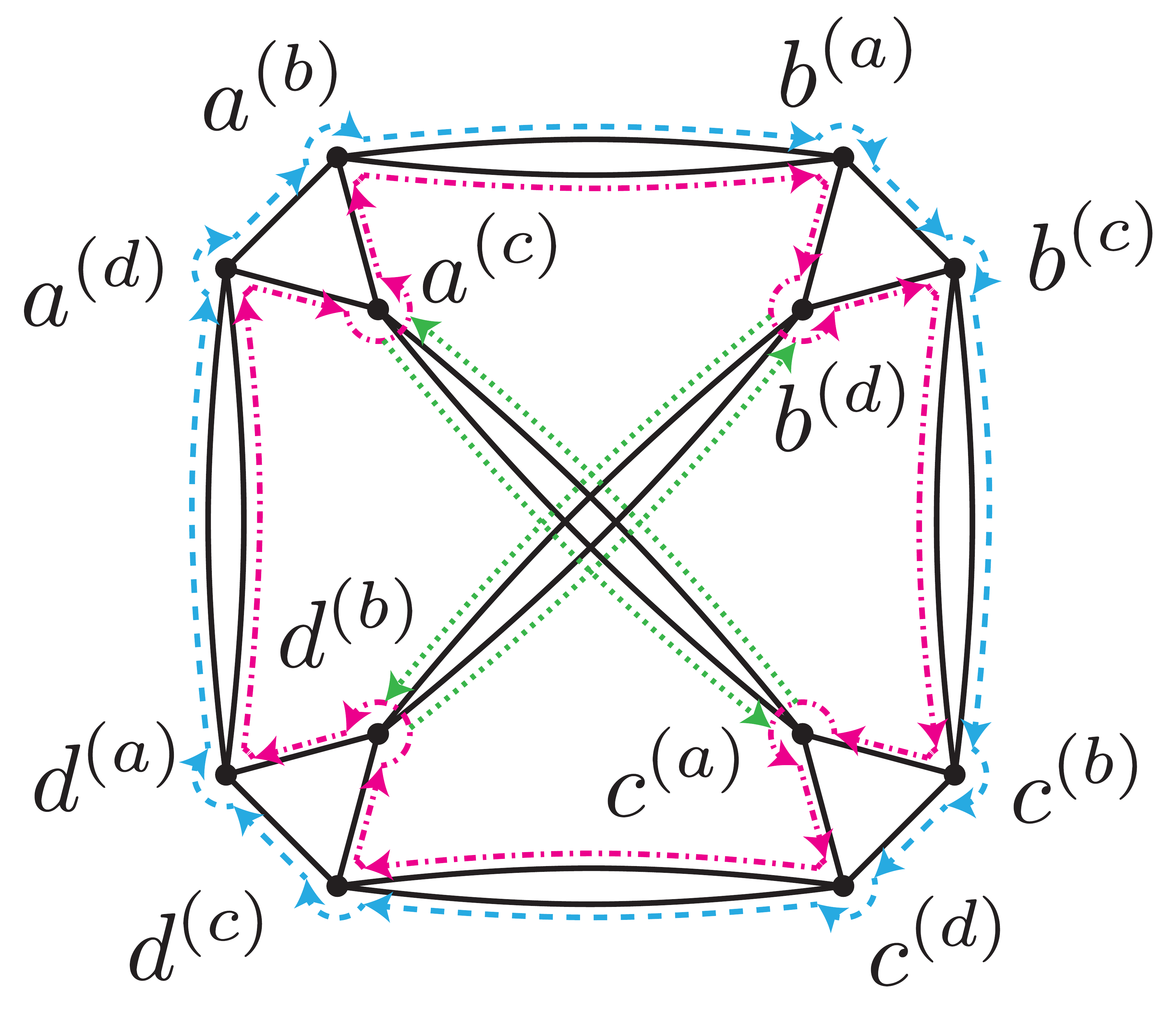}
            \caption{}
        \end{subfigure}
        \caption{Figure showing (a) a Hamiltonian path (blue dashed arrows) on the complete graph on vertices $V=\{a,b,c,d\}$ and (b) the corresponding disjoint trails described by the words $\mathbf{V}$ (blue dashed arrows) and $\mathbf{W}$ (pink dashed-dotted arrows) from \cref{eq:trailV,eq:trailW} on the associated $K_3$-expansion $\Lambda(K_V)$. The edges used to extend the tour to a Eulerian tour as captured by \cref{alg:eul_lifting} are show as green dotted arrows.}
        \label{fig:triangular_expansion_paths}
    \end{figure}
    
    Now consider the word $\mathbf{VW}$. This word describes a trail $U_{VW}$ on $\Lambda(R)$ that visits every vertex in $V'$ exactly twice in the same order. This means $U_{VW}$ is a semi-ordered tour. It is however not Eulerian. To make it Eulerian we have to extend the tour $U_{VW}$ to include all edges in $\Lambda(R)$. Note that these edges are precisely the edges connecting the vertices $x_i^{(v_i)},v_i^{(x_i)}$ for all $i\in [k]$. We can lift $U_{VW}$ to a Eulerian tour by adding vertices to $W$ by the following algorithm.
\begin{algorithm}
\caption{Algorithm for lifting the tour $U_{VW}$ to a Eulerian tour on $\Lambda(R)$}
    \label{alg:eul_lifting}
\begin{algorithmic}
\For{$i \in [k]$}
\If{$x_i^{(v_i)} v_i^{(x_i)} x_i^{(v_i)} \not\subset \mathbf{X'}$}
\State Insert $v_i^{(x_i)} x_i^{(v_i)}$ into $\mathbf{X'}$ right after $x_i^{(v_i)}$
\EndIf
\EndFor
\end{algorithmic}
\end{algorithm}
    It is easy to see that the tour described by $\mathbf{VW}$ after running \cref{alg:eul_lifting} is also Eulerian and is hence a Soet with respect to the set $V'$. This completes the lemma.

\end{proof}

Next we prove a necessary condition (\cref{lem:soet_to_ham}) for the existence of a Soet on a subset $V'$ of the vertices of a $4$-regular graph $F$.

\begin{lem}\label{lem:soet_no_triangle}
    Let $F$ be a $4$ regular graph and $V'\geq4$ be a subset of its vertices. If the fully connected graph $K_3$ is an induced subgraph of $F[V']$ then $F$ does not allow for a Soet with respect to $V'$
\end{lem}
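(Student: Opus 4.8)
The plan is to translate the combinatorial constraint imposed by a Soet into a statement about which pairs of $V'$-vertices can be \emph{consecutive}, and then show that the three mutually adjacent vertices of the $K_3$ cannot simultaneously satisfy that constraint once $|V'|\geq 4$. The whole argument is by contradiction: I would assume a Soet $U$ with respect to $V'$ exists and derive an impossible configuration among the triangle vertices.

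First I would record an elementary but crucial observation about the induced double-occurrence word $m(U)$ of any Eulerian tour $U$ (read cyclically, since the tour is a closed walk): two vertices $u,v$ occur as adjacent letters in $m(U)$ precisely when the tour traverses an edge joining $u$ and $v$. Because a Soet is in particular a Eulerian tour, it traverses \emph{every} edge of $F$ exactly once. Hence, if $a,b,c\in V'$ are the three vertices of the induced $K_3$, then each of the edges $ab$, $bc$, $ca$ is traversed, so each of the pairs $\{a,b\}$, $\{b,c\}$, $\{c,a\}$ appears as a pair of directly adjacent letters in $m(U)$.

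Next I would feed this into the Soet structure. Writing $m(U)[V']=\bs{s}\bs{s}$ with $\bs{s}=s_0s_1\cdots s_{k-1}$ and $k=|V'|$, deleting the non-$V'$ letters from $m(U)$ cannot separate two letters that were already adjacent, so any pair of $V'$-vertices adjacent in $m(U)$ stays adjacent in $\bs{s}\bs{s}$. The adjacent pairs of the cyclic word $\bs{s}\bs{s}$ are exactly $\{s_i,s_{(i+1\bmod k)}\}$, i.e. the consecutive pairs of the Soet in the sense of \cref{def:consecutive} (the intermediate word being empty and thus trivially free of $V'$-letters). Therefore the observation above forces $a,b,c$ to be \emph{pairwise consecutive}. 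Equivalently, the ``consecutiveness graph'' on $V'$, whose edges are the consecutive pairs, is the $k$-cycle $s_0 - s_1 - \cdots - s_{k-1} - s_0$, and I have just produced a triangle $a,b,c$ inside it.

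Finally I would close the argument by observing that $C_k$ is triangle-free for every $k\geq 4$: each $s_i$ has exactly the two consecutive neighbours $s_{(i-1\bmod k)}$ and $s_{(i+1\bmod k)}$, and for $k\geq 4$ these two are themselves non-adjacent, so no three vertices of $C_k$ are pairwise consecutive. This contradicts the pairwise consecutiveness of $a,b,c$, so no Soet with respect to $V'$ can exist. The step I expect to require the most care is the first translation: I must argue cleanly that every edge of $F$—in particular the three triangle edges, which join \emph{two} $V'$-vertices—forces its endpoints to be directly adjacent letters in the cyclic word $m(U)$, handling the closed-tour boundary and the fact that $F$ may be a genuine multigraph (the conclusion is unaffected by multi-edges, and the induced-$K_3$ hypothesis only guarantees that the three joining edges are present, which is all I use).
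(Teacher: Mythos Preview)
Your proposal is correct and follows essentially the same approach as the paper: both arguments observe that any edge of $F$ between two $V'$-vertices forces those vertices to be consecutive in the Soet, and then derive a contradiction from having three pairwise-consecutive vertices when $|V'|\geq 4$. Your presentation is slightly more explicit (phrasing the last step as ``$C_k$ is triangle-free for $k\geq 4$''), but the underlying idea is identical to the paper's.
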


\begin{proof}
    Assume that $F$ has three vertices $V'=\{u,v,w\}$ such that $F[V']=K_3$.
    Let $U$ be a Eulerian tour on $F$ (note that $U$ always exists).
    Assume by contradiction that $U$ is a Soet with respect to $V'$.
    It is easy to see that since $u$ and $v$ are adjacent in $F$ they must also be consecutive in $U$.
    However the same is true for $u$ and $w$ and also $w$ and $v$.
    This means a tour starting at $u$ and must traverse $v$, then $w$, and then immediately $u$ again (up to interchanging $u$ and $w$).
    Since $\{u,v,w\}$ is a strict subset of $V'$ (since by assumption $|V'|\geq 4$) this means that, when starting at $u$, the tour $U$ does not traverse all vertices in $V'$ before returning to $u$. This gives a contradiction with the definition of Soet from which the lemma follows.
\end{proof}

Now we will leverage \cref{lem:soet_no_triangle} to prove that if the $K_3$-expansion $\Lambda(R)$ of a $3$-regular graph $R$ allows for a Soet with respect to a vertex-set $V'$ with $|V'| = 2|V(R)|$ then the graph $R$ must be Hamiltonian.

\begin{lem}\label{lem:soet_to_ham}
Let $R$ be a $3$-regular graph and $\Lambda(R)$ its $K_3$-expansion. If there exists a set $V'\subset V(\Lambda(R))$ with $|V'|=2|V(R)|$ such that $\Lambda(R)$ allows for a Soet with respect to $V'$ then $R$ must be Hamiltonian.
\end{lem}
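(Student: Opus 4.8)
The plan is to combine a counting argument with a local analysis of how a Soet traverses the triangles $T_v$ of $\Lambda(R)$. Write $k=|V(R)|$, so that $|V'|=2k$ and $\Lambda(R)$ has $3k$ vertices grouped into the $k$ triangles $\{T_v\}_{v\in V(R)}$. First I would invoke \cref{lem:soet_no_triangle}: since each $T_v$ induces a $K_3$ and $|V'|=2k\geq 4$, no $T_v$ can have all three of its vertices in $V'$, so each triangle contains \emph{at most} two vertices of $V'$. As there are $k$ triangles and $|V'|=2k$, each triangle contains \emph{exactly} two vertices of $V'$; denote them $a_v,b_v$ and the third (non-$V'$) vertex $c_v$. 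Because $a_v$ and $b_v$ are adjacent in $\Lambda(R)$, the same observation used in the proof of \cref{lem:soet_no_triangle} — that an edge traversed by the tour forces its endpoints to appear adjacently in $m(U)$ — shows that $a_v$ and $b_v$ are consecutive in the Soet $U$.

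Next I would pin down how $U$ behaves inside a single triangle. Each triangle has six incident inter-triangle edges (its three doubled external edges), so $U$ enters and leaves $T_v$ exactly three times; call these three passages the \emph{visits} to $T_v$. Each visit enters along one external edge, possibly traverses internal edges, and leaves along another external edge. A short count completes the local picture: each vertex of $T_v$ is visited exactly twice by $U$ and carries exactly two external edges, so on both of its occurrences it must be an endpoint (entry or exit) of a visit; hence no vertex is ever an interior vertex of a visit. Since the three internal edges must then be distributed among the three visits with no visit using more than one, each visit traverses exactly one internal edge and has the shape (external in) $\to p\to q\to$ (external out) with $\{p,q\}$ an internal edge. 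Crucially, because $c_v$ is the \emph{only} non-$V'$ vertex of $T_v$, no internal edge joins two non-$V'$ vertices, so every visit has at least one endpoint in $V'$.

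I would then assemble the Hamiltonian cycle. The $k$ pairs $\{a_v,b_v\}$ partition $V'$ and each pair is consecutive, i.e.\ adjacent in the cyclic order $\bs{s}$ in which $U$ visits $V'$; thus these pairs form a perfect matching of the cycle $C_{2k}$ using only its edges. Such a matching is necessarily one of the two alternating ones, so after relabelling the Soet order reads $a_{v_0}b_{v_0}a_{v_1}b_{v_1}\cdots a_{v_{k-1}}b_{v_{k-1}}$ (cyclically), where $v_0,\dots,v_{k-1}$ are distinct and hence exhaust $V(R)$. It remains to show that $v_j$ and $v_{j+1}$ are adjacent in $R$. Consider the portion of $m(U)$ strictly between $b_{v_j}$ and the following $V'$-vertex $a_{v_{j+1}}$; by consecutiveness it contains only $c$-vertices. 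This portion cannot stay inside $T_{v_j}$, whose only $V'$-vertices are $a_{v_j},b_{v_j}$, so it crosses at least one external edge. It cannot cross a \emph{second} one: the first crossing lands, as an entry, into a visit of the neighbouring triangle, whose entry vertex is either already in $V'$ or is a $c$-vertex, in which case the very next step along the tour is that visit's single internal edge to its $V'$-endpoint. Either way the portion terminates at a $V'$-vertex of the neighbouring triangle after exactly one external crossing. Hence $a_{v_{j+1}}$ lies in a triangle joined to $T_{v_j}$ by an external edge, and such an edge exists precisely because $v_j$ and $v_{j+1}$ are adjacent in $R$. Therefore $v_0v_1\cdots v_{k-1}v_0$ is a Hamiltonian cycle in $R$.

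The step I expect to be the main obstacle is the no-wandering claim in the last paragraph: a priori the Soet could thread its way through the non-$V'$ vertices of many triangles between two consecutive $V'$-vertices, and this must be ruled out to guarantee that each transition corresponds to a single edge of $R$ rather than a longer walk. The resolution is exactly the per-visit structure established in the second paragraph — every visit carries a $V'$-endpoint, and an external entry is immediately followed by the visit's unique internal edge — which forces any $c$-vertex reached through an external edge to be adjacent, along the tour, to a $V'$-vertex. Making this local structure fully rigorous, and in particular the claim that every visit traverses exactly one internal edge, is where the real work lies.
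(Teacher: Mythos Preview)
Your counting argument showing each triangle contains exactly two vertices of $V'$ matches the paper's, as does the observation that $a_v,b_v$ must be consecutive; the perfect-matching-on-$C_{2k}$ step to get the Soet order $a_{v_0}b_{v_0}\cdots a_{v_{k-1}}b_{v_{k-1}}$ is a clean way to organise what the paper leaves implicit.

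The genuine gap is in your second paragraph. The inference ``each vertex carries exactly two external edges and is visited twice, so on both occurrences it must be an endpoint of a visit'' is invalid: nothing prevents the two external edges from both being used on the \emph{same} occurrence, with the two internal edges used on the other. Concretely, a triangle $\{p,q,r\}$ can be traversed by the three visits
\[
(p\to q\to r),\qquad (r\to p),\qquad (q),
\]
using $2$, $1$, and $0$ internal edges respectively; in the last visit the tour enters at $q$ along one copy of the doubled external edge and immediately exits along the other. If $q=c_v$, this visit has \emph{no} endpoint in $V'$, so your key claim ``every visit has at least one $V'$-endpoint'' is false, and the no-wandering argument collapses: after the first external crossing into $c_z$, the next step may well be the second external edge straight back out of $T_z$, not an internal edge to a $V'$-vertex. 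You correctly flagged this step as the crux, but the obstacle is not that the claim needs more work --- it is simply not true.

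The repair is not far. Because external edges are doubled, each $c$-vertex has a \emph{unique} external neighbour; hence the subgraph induced on the $c$-vertices is a disjoint union of isolated vertices and double-edge pairs, and any maximal sub-word between consecutive $V'$-vertices has at most three letters, all drawn from $\{c_{v_j},c_z\}$ for a single neighbour $z$ of $v_j$. A short case check then forces $v_{j+1}\in\{v_j,z\}$. The paper bypasses the per-visit bookkeeping entirely: it argues directly that if consecutive $V'$-vertices lie in triangles $T_w,T_{w'}$ with $w,w'$ neither equal nor adjacent, the connecting trail must enter a third triangle $T_{w''}$, and since only one vertex of $T_{w''}$ lies outside $V'$, the trail picks up a $V'$-vertex there, contradicting maximality.
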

\begin{proof}
Assume that there exists a subset $V'$ of $V(\Lambda(R))$ with $|V'| = 2|V(R)|$ such that $\Lambda(R)$ allows for a Soet $U$ with respect to $V'$. 

    Note first that since $|V(\Lambda)|=3|V(R)|$ and $|V'|=2|V(R)|$ we must have, by \cref{lem:soet_no_triangle} that $|V[T_u] \cap V'|=2$ for all $u\in V(R)$. This is easiest seen by contradiction. Assume that there exists a $u\in V(R)$ such that $|V[T_u]\cap V'|<2$. Since $V(T_v)\cap V(T_{v'})=\emptyset$ for all $v,v'\in V(R)$, $|V(\Lambda)|=3|V(R)|$, and $|V'|=2|V(R)|$ this implies there must also a exist a $u'\in V(R)$ such that $|V(T_{u'})\cap V'|=3$. This means that $V(T_{u'})\subset V'$. However the induced subgraph $\Lambda(R)[T_{u'}]$ is isomorphic to $K_3$ (this is easily seen from the definition of $K_3$-expansion). By \cref{lem:soet_no_triangle} we must thus conclude that $\Lambda(R)$ does not allow for a Soet with respect to $V'$ leading to a contradiction. Hence we must have that $|V(T_u)\cap V'|=2$ for all $u\in V(R)$.

    Now consider two vertices $x, x'\in V'$ such that $ x,x'$ are consecutive in the Soet $U$.
    Note that, by definition of $\Lambda(R)$, there must exist $w,w'\in V(R)$ such that $x\in T_{w}$ and $x'\in T_{w'}$.
    We will now argue that we must have either $w=w'$ or $w,w'$ are adjacent in $R$.
    We argue this by contradiction.
    Assume thus that $w, w'$ are neither equal nor adjacent in $R$.
    Now consider the maximal sub-word $\mathbf{Y}$ of $m(U)$ associated to $x,x'$.
    Since $w,w'$ are neither equal nor adjacent in $R$, the trail described by the word $\mathbf{Y}$ must pass through a triangle subgraph different from $T_{w}$ and $T_{w'}$, i.e. there exist a vertex $w''\in V(R)$ such that $|\mathbf{Y}\cap V(T_{w''})|\geq 2$.
    However since by construction $|V(T_{w''})\cap V'|=2$ (as shown above) and $|V(T_{w''})|=3$ we must have that $|V'\cap \mathbf{Y}|\geq1$.
    This is however in contradiction with the maximality of the sub-word $\mathbf{Y}$.
    Hence we must have that $w=w'$ or that $(w,w')\in E(R)$. 
     Now consider the word $m(U)$ associated to the Soet $U$ and the induced sub-word $m(U)[V']$. By the above, and the fact that if two vertices in $V'$ are adjacent in $\Lambda(R)$, they must also be consecutive in $U$ (this is a consequence of $U$ being Eulerian and thus having to traverse the edge connecting these vertices), we have that $m(U)[V']$ must be of the form
\begin{equation}
    m(U)[V'] = x_0x'_0x_1x'_1x_2x'_2 \ldots x_{k-1}x'_{k-1}x_0x'_0 \ldots x_{k-1}x'_{k-1}
\end{equation}
    where $x_i,x'_i\in T_{w_i}$ and $\{w_1, \ldots w_k\} = V(R)$ and moreover that $(w_i,w_{i+1})\in E(R)$ for all $i \in [k]$ and also $(w_k,w_{0})\in E(R)$. This immediately implies that the word $\mathbf{M} = w_1 w_2\ldots w_k$ describes a Hamiltonian tour on $R$, and hence that $R$ is Hamiltonian.

\end{proof}

Since \cref{lem:soet_to_ham} and \cref{lem:ham_to_soet} provide necessary and sufficient conditions for a $3$-regular graph being Hamiltonian in terms of whether a $K_3$-expanded graph allows for a Soet, we can now easily prove the hardness of \cref{prob:Soet}.

\begin{thm}
\SOET\ is NP-Hard.
\end{thm}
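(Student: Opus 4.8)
The plan is to package the two lemmas already proved into a polynomial-time many-one reduction from \CH\ to \SOET. Since it is a classical fact that deciding Hamiltonicity of cubic (3-regular) graphs is NP-complete, exhibiting such a reduction immediately yields the NP-hardness of \SOET; I would cite this hardness of \CH\ rather than reprove it, as it is the only external ingredient.

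First I would fix the reduction map. Given an instance $R$ of \CH, send it to the pair $(\Lambda(R), 2|V(R)|)$, where $\Lambda(R)$ is the $K_3$-expansion of \cref{def:triangular} and the integer $k = 2|V(R)|$ is the target Soet size. I would then check that this map is well-formed and polynomial-time computable: by the remarks following \cref{def:triangular}, $\Lambda(R)$ is a $4$-regular multi-graph with $3|V(R)|$ vertices and $2|E(R)| + 3|V(R)|$ edges, so both the expansion and the integer $2|V(R)|$ are produced in time polynomial in the size of $R$. I would also note $k = 2|V(R)| \le 3|V(R)| = |V(\Lambda(R))|$, so $(\Lambda(R), k)$ satisfies the constraint $k \le |V(F)|$ required by \cref{prob:Soet} and is a legitimate \SOET\ instance.

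Next I would establish correctness: $R$ is a yes-instance of \CH\ if and only if $(\Lambda(R), 2|V(R)|)$ is a yes-instance of \SOET. This is precisely the conjunction of the two lemmas. The forward direction is \cref{lem:ham_to_soet}: if $R$ is Hamiltonian then $\Lambda(R)$ admits a Soet of size $2|V(R)|$, which in particular exhibits a subset $V' \subseteq V(\Lambda(R))$ with $|V'| = 2|V(R)|$ witnessing a Soet, exactly the object \cref{prob:Soet} asks for. The reverse direction is \cref{lem:soet_to_ham}: if there exists some $V'$ with $|V'| = 2|V(R)|$ admitting a Soet on $\Lambda(R)$, then $R$ is Hamiltonian. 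I would take care that the existential quantifier over $V'$ in the definition of \SOET\ matches the statements of the lemmas, which it does: \SOET\ asks only for the existence of a size-$k$ set $V'$ admitting a Soet, and this is exactly what the lemmas produce and consume once $k$ is set to $2|V(R)|$.

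There is essentially no hard step remaining, which is the point: all of the combinatorial content has already been discharged into \cref{lem:ham_to_soet} and \cref{lem:soet_to_ham}. The only items to verify are that the reduction runs in polynomial time and that it preserves yes- and no-instances in both directions, both of which are immediate from the above. Combining these with the NP-hardness of \CH\ gives that \SOET\ is NP-hard, completing the proof.
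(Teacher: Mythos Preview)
Your proposal is correct and follows essentially the same approach as the paper: construct the polynomial-time reduction $R \mapsto (\Lambda(R), 2|V(R)|)$, invoke \cref{lem:ham_to_soet} and \cref{lem:soet_to_ham} for the two directions of correctness, and conclude from the NP-completeness of \CH. If anything, you are slightly more careful than the paper in explicitly verifying that $k = 2|V(R)| \le |V(\Lambda(R))|$ so that the produced pair is a valid \SOET\ instance.
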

\begin{proof}
Let $R$ be an instance of \CH, that is, a $3$-regular graph on $k$ vertices. From $R$ we can construct the $4$-regular $K_3$-expansion $\Lambda(R)$. Note that this can be done in poly-time in $k$. Now note that $(\Lambda(R), 2k)$ is an instance of \SOET. If $R$ is a YES instance of \CH, that is, $R$ is Hamiltonian, then by \cref{lem:ham_to_soet} we have that $(\Lambda(R), 2k)$ is a YES instance of \SOET. On the other hand, if $(\Lambda(R), 2k)$ is a YES instance of \SOET, then $R$ is a YES instance of \CH\ by \cref{lem:soet_to_ham}. By contra-position this means that if $R$ is a NO instance of \CH, then $(\Lambda(R), 2k)$ is a NO instance of \SOET. This means \CH\ is Karp-reducible to Soet. Since \CH\ is NP-complete~\cite{karp1972reducibility}, this implies that Soet\ is NP-hard.
\end{proof}

\subsection{ISO-VERTEXMINOR is NP-Hard}\label{ssec:soet_to_svm}
Note first that \SVM\ trivially reduces to \VM, as it is a strict sub-problem. This means that if \SVM\ is NP-hard then so is \VM. 
In this section we show that the \SOET\ reduces to \VM. For this we will make use of the properties of circle graphs, discussed in \cref{sec:prel}.

\Cref{cor:reg_to_circle} states that a $4$-regular multi-graph $F$ allows for a Soet\ with respect to a subset of its vertices $V'\subseteq V(F)$ if and only if an alternance graph $\mathcal{A}(U)$ (which is a circle graph), induced by some Eulerian tour on $F$, has a star graph $S_{V'}$ on $V'$ as a vertex-minor.

Since circle graphs are a subset of all simple graphs we can then decide whether a $4$-regular graph $F$ allows for a Soet~with respect to some subset $V'$ of its vertices by constructing the circle graph induced by an Eulerian tour on $F$ and checking whether it has a star-vertex-minor on the vertex set $V'$. This leads to the following theorem.

\begin{thm}\label{thm:red_Soet_SVM}
The decision problem \SOET\ reduces to \SVM.
\end{thm}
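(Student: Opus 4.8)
The plan is to give a polynomial-time many-one reduction that takes an instance $(F,k)$ of \SOET, consisting of a $4$-regular multi-graph $F$ and an integer $k$, and produces an instance $(G,k)$ of \SVM\ in which $G$ is a circle graph built from $F$ and the target star-size $k$ is left unchanged. The entire correspondence is carried by \cref{cor:reg_to_circle}, which equates, for a \emph{fixed} vertex set $V'$, the existence of a Soet on $F$ with respect to $V'$ with the existence of a star-graph vertex-minor $S_{V'}<\mathcal{A}(U)$ for any Eulerian tour $U$ on $F$. Since both \SOET\ and \SVM\ existentially quantify over a size-$k$ vertex set, the crucial observation is that $\mathcal{A}(U)$ has vertex set exactly $V(F)$, so a witness $V'$ for one problem is literally the same set of vertices serving as a witness for the other.

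First I would dispose of a connectivity caveat. A Soet is by definition a (semi-ordered) Eulerian tour, and Eulerian tours exist only on connected multi-graphs; hence if $F$ is disconnected then $(F,k)$ is automatically a NO instance of \SOET, and the reduction can output any fixed NO instance of \SVM\ (for instance a graph on fewer than $k$ vertices, which has no size-$k$ vertex subset at all). Connectivity of $F$ is checkable in polynomial time, so this branch is harmless. Assuming $F$ connected, it is Eulerian because it is $4$-regular, so a Eulerian tour $U$ can be found in polynomial time, its induced double-occurrence word $m(U)$ read off directly, and the alternance graph $G=\mathcal{A}(U)$ computed by testing each pair of letters against the alternance pattern of \cref{def:alternance}. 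All of these steps run in time polynomial in $|V(F)|$, and by construction $V(G)=V(F)$.

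It then remains to verify correctness in both directions. If $(F,k)$ is a YES instance of \SOET, witnessed by $V'\subseteq V(F)$ with $|V'|=k$ admitting a Soet, then \cref{cor:reg_to_circle} yields $S_{V'}<G$; since $V'\subseteq V(G)$ and $|V'|=k$, this exhibits $(G,k)$ as a YES instance of \SVM. Conversely, if $(G,k)$ is a YES instance witnessed by some $V'\subseteq V(G)$ with $|V'|=k$ and a star $S_{V'}<G$, then the reverse implication of \cref{cor:reg_to_circle} produces a Soet on $F$ with respect to $V'$, and because $V(G)=V(F)$ this same $V'$ witnesses that $(F,k)$ is a YES instance of \SOET.

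I expect the only genuine subtlety to be the interplay between the existential quantifiers on $V'$ in the two problems: \cref{cor:reg_to_circle} is a statement about a single fixed $V'$, so soundness of the reduction hinges on the fact that the construction pins down $V(G)=V(F)$ once and for all, making every candidate witness for \SVM\ an admissible candidate witness for \SOET\ and vice versa. The connectivity bookkeeping and the polynomial-time construction of the alternance graph are the remaining points to check, but both are routine.
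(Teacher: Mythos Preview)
Your proposal is correct and follows essentially the same route as the paper: construct an Eulerian tour $U$ on $F$, form the circle graph $G=\mathcal{A}(U)$, and invoke \cref{cor:reg_to_circle} so that the size-$k$ witness $V'$ transfers verbatim between the two problems. You are in fact slightly more careful than the paper's own proof, since you explicitly handle the disconnected case (where no Eulerian tour exists) and spell out why $V(G)=V(F)$ makes the existential quantifiers line up; the paper leaves both points implicit.
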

\begin{proof}
    Let $(F,k)$ be an instance of \SOET, where $F$ is a $4$-regular multi-graph and $k\leq |V(F)|$ some integer. Also let $G$ be a circle graph induced by some Eulerian tour $U$ on $F$. From \cref{cor:reg_to_circle} we see that $G$ has $S_{V'}$ as a vertex-minor for some subset of vertices $V'$ of $G$ if and only if $F$ allows for a Soet~with respect to this vertex set $V'$. Since an Eulerian tour $U$ can be found in polynomial time~\cite{Fleury1883} and since $G$ can be efficiently constructed given $U$~\cite{dahlberg2018transform}, considering the case of $|V'|=k$ concludes the reduction.
\end{proof}

\subsection{ISO-VERTEXMINOR is in NP}
Next we argue that the problem \VM\ is in NP. This just follows from the fact that the non-isomorphic vertex-minor problem is in NP. 

\begin{thm}\label{thm:vm_in_np}
The decision problem \VM\ is in NP.
\end{thm}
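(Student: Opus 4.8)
The plan is to show that \VM\ is in NP by exhibiting a polynomial-size certificate together with a polynomial-time verification procedure. The natural certificate for a YES-instance $(G,H)$ consists of two pieces of data: (1) a sequence of local complementations and vertex-deletions that transforms $G$ into some graph $\tilde{H}$, and (2) an explicit isomorphism (a bijection of vertex sets) witnessing $\tilde{H}\cong H$. The verifier then applies the operations in (1) to $G$ step by step, obtains $\tilde{H}$, and checks that the bijection in (2) is indeed a graph isomorphism between $\tilde{H}$ and $H$. Both checks are clearly polynomial-time once we know the certificate is polynomial in size, so the entire argument reduces to bounding the length of the operation sequence in (1).

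First I would recall that a vertex-minor is reached by local complementations and vertex-deletions, and that the only genuine concern is the number of local complementations, since there are at most $|V(G)|$ vertex-deletions. A priori a witnessing sequence could be arbitrarily long, so the key step is to invoke a normal-form or bounded-length result for local-complementation sequences. The standard fact (following from the theory of vertex-minors and the local-equivalence orbit structure, as used in \cite{dahlberg2018transform}) is that whenever $\tilde{H}<G$, there is a witnessing sequence whose number of local complementations is polynomial in $|V(G)|$; intuitively, once the set $S$ of deleted vertices is fixed, deciding local equivalence between $G[V\setminus S]$-type graphs reduces to a computation (over $\mathbb{F}_2$) whose witness length is polynomial. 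Thus one may guess the subset of deleted vertices together with a polynomially long sequence of local complementations, and this is a polynomial-size certificate.

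The verification itself is routine: each local complementation $\tau_v$ can be applied in time $O(|V(G)|^2)$ using \cref{def:LC_prel}, each vertex-deletion in time $O(|V(G)|)$, and a candidate isomorphism is checked in time $O(|V(G)|^2)$ by comparing adjacency under the bijection. Chaining polynomially many such operations keeps the total running time polynomial, so the verifier runs in polynomial time and accepts exactly when $(G,H)$ is a YES-instance. I expect the main obstacle to be precisely the polynomial bound on the certificate length: one must argue that a short witnessing sequence always exists, rather than merely some witnessing sequence. This is handled by appealing to the algebraic characterization of local equivalence of (the relevant graphs) over $\mathbb{F}_2$, where membership in the local-equivalence orbit is decidable with a polynomial-size certificate, so no super-polynomially long sequence of local complementations is ever needed.
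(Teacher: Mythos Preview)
Your proposal is correct and follows essentially the same approach as the paper: combine a polynomial-size witness for the labeled vertex-minor relation (the paper cites \cite{dahlberg2018transform}; you spell out the underlying reason via the $\mathbb{F}_2$ local-equivalence characterization) with a polynomial-size witness for graph isomorphism. The paper's proof is simply a terser version of what you wrote.
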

\begin{proof}
    From~\cite{dahlberg2018transform} we know that there exists a polynomial-length witness for the problem of deciding if a labeled graph $G$ has a vertex-minor equal to another graph $H$ on some fixed subset of its vertices.
    Since $\mathrm{GRAPHISOMORPHISM}$ is in NP we can construct a polynomial-length for \VM, i.e. to decide if $G$ has a vertex-minor isomorphic to $H$.
    We thus conclude that \VM\ is in NP.
\end{proof}

\section*{Acknowledgements}
AD, JH and SW were supported by an ERC Starting grant, and NWO VIDI grant, and Zwaartekracht QSC.

\bibliographystyle{unsrt}
\bibliography{iso-vm-library}

\end{document}